\documentclass[11pt]{article}
\usepackage{algorithm, algpseudocode}
\usepackage{amsmath,amssymb,amsthm}
\usepackage{verbatim}
\usepackage{mathrsfs}
\textwidth 16cm\textheight 22cm\oddsidemargin=0cm\evensidemargin=\oddsidemargin

\usepackage{subfig}
\usepackage{color} 
\usepackage{graphicx}
\setlength{\textwidth}{6.2in} \setlength{\textheight}{8.7in}
\setlength{\oddsidemargin}{0.25in}
\setlength{\evensidemargin}{.25in} \setlength{\topmargin}{0pt}
\setlength{\baselineskip}{20pt} \vspace{0.6cm}

\newtheorem{lemma}{Lemma}[section]

\newtheorem{them}[lemma]{Theorem}
\newtheorem{lm}[lemma]{Lemma}
\newtheorem{coro}[lemma]{Corollary}

\newtheorem{rem}{Remark}
\newtheorem{claim}{Claim}
\newtheorem{exmp}{Example}

\newtheorem{que}{Question}
\def \T{\textup{T}}
\begin{document}
	\title{Generalized spectral characterization of signed trees}
\author{\small Yizhe Ji$^{{\rm a}}$\quad\quad Wei Wang$^{\rm a}$\thanks{Corresponding author: wang\_weiw@xjtu.edu.cn}\quad\quad Hao Zhang$^{\rm b}$
\\
{\footnotesize$^{\rm a}$School of Mathematics and Statistics, Xi'an Jiaotong University, Xi'an 710049, P. R. China}\\
{\footnotesize$^{\rm b}$School of Mathematics, Hunan University, Changsha 410082, P. R. China}
}
\date{}
	\maketitle
	\begin{abstract}
Let $T$ be a tree with an irreducible characteristic polynomial $\phi(x)$ over $\mathbb{Q}$. Let $\Delta(T)$ be the discriminant of $\phi(x)$. It is proved that if $2^{-\frac n2}\sqrt{\Delta(T)}$ (which is always an integer) is odd and square free, then every signed tree with underlying graph $T$
is determined by its generalized spectrum.
	\end{abstract}
\noindent\textbf{Keywords:} Graph spectra; Cospectral graphs; Determined by spectrum; Rational orthogonal matrix; Signed graph\\
\noindent\textbf{Mathematics Subject Classification:} 05C50
	\section{Introduction}



It is well known that the spectra of graphs encode a lot of combinatorial information about the given graphs. A major unsolved question in spectral graph theory is: ``What kinds of graphs are determined (up to isomorphism) by their spectrum (DS for short)?". The problem originates from chemistry and was raised in 1956 by G\"{u}nthard and Primas~\cite{[1]}, which relates H\"{u}ckle's theory in chemistry to graph spectra. The above problem is also closely related to a famous problem of Kac~\cite{[13]}: ``Can one hear the shape of a drum?" Fisher~\cite{[12]} modelled the drum by a graph, and the frequency of the sound was characterized by the eigenvalues of the graph. Hence, the two problems are essentially the same.
	
	It was commonly believed that every graph is DS until the first counterexample (a pair of cospectral but non-isomorphic trees) was found by Collatz and Sinogowitz~\cite{[2]} in 1957. Another famous result on cospectral graphs was given by Schwenk~\cite{[3]}, which states that almost every tree is not DS. For more constructions of cospectral graphs, see, e.g.,~\cite{LS,GM1,S}. However, it turns out that showing a given graph to be DS is generally very hard and challenging. Up to now, only a few graphs with very special structures are known to be DS. We refer the reader to~\cite{[8],[9]} for more background and known results.
	
	In recent years, Wang and Xu~\cite{WX1} and Wang~\cite{Wang1,Wang2} considered a variant of the above problem. For a simple graph $G$, they defined the \emph{generalized spectrum} of $G$ as the spectrum of $G$ together with that of its complement $\bar G$. A graph $G$ is said to be \emph{determined by its generalized spectrum} (DGS for short), if any graph having the same generalized spectrum as $G$ is necessarily isomorphic to $G$.
	
	Let $G$ be a graph on $n$ vertices with adjacency matrix $A = A(G)$. The \emph{walk-matrix} of $G$ is defined as $$W(G)=[e,Ae,\ldots,A^{n-1}e],$$ where $e$ is the all-one vector. Wang~\cite{Wang1,Wang2} proved the following theorem.	

\begin{them}[\cite{Wang1,Wang2}]\label{wang1}
		If $2^{-\lfloor\frac n2\rfloor}\det (W)$ is odd and square-free, then $G$ is DGS.
	\end{them}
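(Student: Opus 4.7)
The plan is to adopt the rational-orthogonal-matrix framework of Johnson--Newman as developed by Wang and Xu. First I would invoke the standard fact that two graphs $G$ and $H$ on $n$ vertices share the generalized spectrum if and only if there exists a \emph{regular} rational orthogonal matrix $Q$ (i.e.\ $Q^{\T}Q=I$ with $Qe=e$) satisfying $Q^{\T}A(G)Q=A(H)$. Combining $Qe=e$ with this conjugation yields $Q^{\T}A(G)^k e=A(H)^k e$ for every $k\ge 0$, hence the walk-matrix identity $Q^{\T}W(G)=W(H)$. The hypothesis implies $\det W(G)\ne 0$, so $Q=W(H)W(G)^{-\T}$ is uniquely determined by $H$.

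Define the \emph{level} $\ell$ of $Q$ as the least positive integer with $\ell Q$ integral. If $\ell=1$ then $Q$ is an integer orthogonal matrix with row sums $1$, forcing $Q$ to be a permutation matrix and therefore $G\cong H$. So the entire proof reduces to ruling out $\ell>1$, which I would do prime by prime. For any prime $p\mid \ell$, set $M:=\ell Q\bmod p\ne 0$; then $M^{\T}M\equiv 0\pmod p$ (from orthogonality) and $M^{\T}W(G)\equiv 0\pmod p$ (from the walk-matrix identity). Thus the column span of $M$ is a nonzero totally isotropic subspace of $\mathbb{F}_p^n$ sitting inside the left null space of $W(G)\bmod p$.

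For an odd prime $p\mid\ell$, the symmetric form on $\mathbb{F}_p^n$ is non-degenerate and the left null space of $W(G)\bmod p$ is $A(G)^{\T}$-invariant (since $W(G)$ has Krylov shape). Combining invariance with the totally isotropic constraint should force this null space to have dimension at least $2$, which in Smith-normal-form language gives $p^2 \mid \det W(G)$. This contradicts square-freeness of $2^{-\lfloor n/2\rfloor}\det W(G)$. A separate and more delicate argument handles $p=2$: after checking that $2^{\lfloor n/2\rfloor}\mid\det W(G)$ always holds, one lifts the isotropy condition modulo $4$ and exploits the integrality of $Q+Q^{\T}$ to show that $2\mid\ell$ produces strictly more than $\lfloor n/2\rfloor$ factors of $2$ in $\det W(G)$, contradicting the hypothesis that $2^{-\lfloor n/2\rfloor}\det W(G)$ is odd.

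The main obstacle I anticipate is the prime $p=2$. Over $\mathbb{F}_2$ the symmetric form degenerates---the quadratic form $v\mapsto v^{\T}v$ reduces to the linear form $\sum v_i$---so the generic totally-isotropic argument yields only $2\mid\det W(G)$, which is nowhere near the $2^{\lfloor n/2\rfloor}$ gap the hypothesis leaves open. Bridging this gap requires a careful $2$-adic Smith-normal-form analysis that tracks how the level of $Q$ interacts with the $2$-adic structure of $W(G)$'s invariant factors; this is the genuine technical heart of the proof and the step where the specific exponent $\lfloor n/2\rfloor$ appears naturally.
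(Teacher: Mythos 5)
First, note that the paper you are working from does not prove this statement at all: Theorem~\ref{wang1} is quoted as background from \cite{Wang1,Wang2}, so the only meaningful comparison is with Wang's published argument. Your framework is the right one and does match it: the reduction via Theorem~\ref{rational} to a regular rational orthogonal $Q$ with $Q^{\T}W(G)=W(H)$, the introduction of the level $\ell$, the observation that $\ell=1$ forces $Q$ to be a permutation matrix, and the prime-by-prime elimination of $\ell>1$ using the fact that $M=\ell Q \bmod p$ spans a nonzero totally isotropic subspace annihilated by $W(G)^{\T}$ modulo $p$. (Minor slip: from $Q^{\T}W(G)=W(H)$ one gets $Q=W(G)W(H)^{-1}$, not $W(H)W(G)^{-\T}$; this does not affect anything.)

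However, both of the steps that actually carry the proof are asserted rather than proved, and the first one is not merely unfinished but pointed in a slightly wrong direction. For an odd prime $p\mid\ell$, the rank inequality $\operatorname{rank}_p(M)+\operatorname{rank}_p(W)\le n$ together with $p^2\nmid\det W$ forces $\operatorname{rank}_p(W)=n-1$ and $M=vb^{\T}$ with $v$ spanning the one-dimensional null space of $W^{\T}$ over $\mathbb{F}_p$ and $v^{\T}v\equiv 0\pmod p$; your hope that ``invariance plus isotropy forces the null space to have dimension at least $2$'' is exactly the nontrivial content here, and a one-dimensional isotropic $A$-invariant null space is perfectly consistent with all the linear-algebra constraints you have listed. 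Wang's actual argument closes the loop differently: it derives from $Av\equiv\lambda v$ and $v^{\T}v\equiv 0$ that $\lambda$ would be a multiple root of the characteristic polynomial modulo $p$, and then uses a discriminant/invariant-factor computation to contradict $p^2\nmid\det W$ --- this lemma is the crux and cannot be waved through. The $p=2$ case, which you correctly identify as the heart of the matter (including the a priori divisibility $2^{\lfloor n/2\rfloor}\mid\det W$ and the lifting of the isotropy condition modulo $4$ using integrality of $Q+Q^{\T}$), is left entirely open. As it stands the proposal is a correct roadmap of the known proof with both load-bearing lemmas missing.
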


The problem of spectral determination of ordinary graphs naturally extends to signed graphs. This paper is a continuation along this line of research for signed graphs in the flavour of Theorem~\ref{wang1}.

Let $\Delta(T)$ be the discriminant of a tree $T$ (see Section 4 for the definition). The main result of the paper is the following theorem.
\begin{them}\label{main}
		Let $T$ be a tree on $n$ vertices with an irreducible characteristic polynomial over $\mathbb{Q}$. If $2^{-\frac n2}\sqrt{\Delta(T)}$ (which is always an integer) is odd and square free, then every signed tree with underlying graph $T$ is DGS.
	\end{them}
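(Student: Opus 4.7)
The plan is to mirror, in the signed setting, the regular-rational-orthogonal-matrix strategy behind Theorem~\ref{wang1}. First I would establish the signed analogue of the standard equivalence: two signed graphs $\Sigma_1,\Sigma_2$ on $n$ vertices share a generalized spectrum if and only if there exists a \emph{regular rational orthogonal matrix} $Q$ (i.e.\ $Q^TQ=I$ and $Qe=e$) with $Q^TA(\Sigma_1)Q=A(\Sigma_2)$. Because $T$ is a tree, every signing is switching-equivalent to the all-positive one, so for each signing $\sigma$ we have $A_\Sigma=D_\sigma A(T)D_\sigma$ for some diagonal $\pm 1$ matrix $D_\sigma$; in particular $A_\Sigma$ has characteristic polynomial $\phi(x)$, which is irreducible by hypothesis.

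Next I relate the walk matrix $W(\Sigma)=[e,A_\Sigma e,\ldots,A_\Sigma^{n-1}e]$ to the discriminant. Writing $A_\Sigma=P\Lambda P^T$ and using the Vandermonde identity, a direct computation yields
\[
(\det W(\Sigma))^{2}=\Delta(T)\cdot\prod_{i=1}^{n}(P^{T}e)_{i}^{\,2}.
\]
Irreducibility of $\phi$ ensures every eigenvalue is main, so $\det W(\Sigma)\neq 0$. Writing the hypothesis as $\Delta(T)=2^{n}m^{2}$ with $m=2^{-n/2}\sqrt{\Delta(T)}$ odd and square-free, this identity constrains the prime factorization of $\det W(\Sigma)$ up to the signing-dependent factor $\prod(P^Te)_i^{\,2}$, which, by irreducibility of $\phi$, is a Galois norm of an element of $\mathbb{Q}(\lambda)$.

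With this in hand I mimic Wang's level argument. For a regular rational orthogonal $Q$ realizing a generalized cospectral mate $\Sigma'$, let $\ell=\ell(Q)$ be its level (the least positive integer with $\ell Q$ integral). The standard identity $W(\Sigma')=Q^T W(\Sigma)$ forces every prime divisor of $\ell$ to divide $\det W(\Sigma)$. A $p$-adic reduction of $(\ell Q)^TA_\Sigma(\ell Q)=\ell^{2}A_{\Sigma'}$ modulo $p$, combined with the irreducibility of $\phi(x)$, should show that for odd primes $p\mid\ell$ the contribution of $\prod(P^Te)_i^{\,2}$ cancels and $p$ must in fact divide $m$; square-freeness of $m$ then rules such $p$ out one by one. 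The prime $p=2$ is treated separately via the bipartiteness of $T$ and the constraint $Qe=e$, forcing $\ell=1$. Consequently $Q$ is an integer orthogonal matrix with unit row sums, hence a permutation, and $\Sigma'\cong\Sigma$ as signed graphs.

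The main obstacle, as I see it, is the transition from the discriminant hypothesis to the level. Unlike in Theorem~\ref{wang1}, our hypothesis controls $\sqrt{\Delta(T)}$ rather than $\det W$ directly, and the signing-dependent factor $\prod(P^Te)_i^{\,2}$ can in principle introduce prime factors into $\det W(\Sigma)$ beyond those dividing $m$. Showing, via a careful Galois/norm computation in $\mathbb{Q}(\lambda)$, that these additional primes cannot enter the level of $Q$ is where I expect most of the technical work to lie, and it is the step with no direct counterpart in Wang's original proof.
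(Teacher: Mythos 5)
Your proposal follows Wang's walk-matrix/level method rather than the route the paper takes, and the step you yourself flag as ``the main obstacle'' is a genuine, unfilled gap rather than a technicality. The hypothesis of Theorem~\ref{main} controls $m=2^{-n/2}\sqrt{\Delta(T)}$, not $\det W(\Sigma)$. Your identity $(\det W(\Sigma))^2=\Delta(T)\prod_i(P^{\T}e)_i^2$ is correct, but the factor $\prod_i(P^{\T}e)_i^2$ can contribute arbitrary odd primes, with arbitrary multiplicity, to $\det W(\Sigma)$, and Wang's level argument excludes a prime $p$ from the level $\ell(Q)$ precisely by exploiting rank conditions on $W$ modulo $p$ --- conditions on which the discriminant hypothesis gives no handle. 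The assertion that ``for odd primes $p\mid\ell$ the contribution of $\prod(P^{\T}e)_i^2$ cancels and $p$ must divide $m$'' is exactly what would have to be proved, and no mechanism is supplied; deferring it to ``a careful Galois/norm computation'' is not an argument. Note that the paper's Remark~1 exhibits a non-permutation $Q$ of level $7$ exactly when $7^2\mid m$, so any correct proof must tie the level to $m$ in an essential way, and your sketch does not do so.

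The paper closes this gap by a device your proposal is missing entirely: a structure theorem (Theorem~\ref{lm1}) showing that, because $\phi$ is irreducible and both signed graphs are bipartite, the unique regular rational orthogonal $Q$ with $Q^{\T}A(\Gamma)Q=A(\tilde{\Gamma})$ is block-diagonal or block-antidiagonal with respect to the bipartition. This reduces the problem to $\hat{Q}_1^{\T}MM^{\T}\hat{Q}_1=\tilde{M}\tilde{M}^{\T}$ and $\hat{Q}_2^{\T}M^{\T}M\hat{Q}_2=\tilde{M}^{\T}\tilde{M}$, where $\Delta(MM^{\T})=2^{-n/2}\sqrt{\Delta(T)}$ is precisely the quantity the hypothesis makes odd and square-free; the Wang--Yu theorem (Theorem~\ref{lm2}) then forces $\hat{Q}_1$ and $\hat{Q}_2$ to be signed permutation matrices, and regularity of $Q$ upgrades it to a permutation. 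You would also need to justify at the outset that a generalized cospectral mate of a signed tree is itself a signed tree (equal edge count plus connectedness forced by irreducibility), a point your write-up skips. Without the block structure theorem or a substitute for it, your level-based plan does not go through.
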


As an immediately consequence of Theorem~\ref{main}, we have
\begin{coro}
	Let $T$ and $T'$ be two cospectral and non-isomorphic trees with a common irreducible characteristic polynomial. Suppose $2^{-\frac n2}\sqrt{\Delta(T)}$ is odd and square free. Then no two signed trees with underlying graphs $T$ and $T'$ respectively are generalized cospectral.
\end{coro}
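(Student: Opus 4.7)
The plan is to derive this corollary as an almost immediate consequence of Theorem~\ref{main}, via a short argument by contradiction. Suppose, for the sake of contradiction, that there exist signed trees $\tilde T$ and $\tilde T'$ with underlying graphs $T$ and $T'$ respectively such that $\tilde T$ and $\tilde T'$ are generalized cospectral. Since $T$ and $T'$ are assumed cospectral, they share the same characteristic polynomial $\phi(x)$, and therefore the same discriminant, so the hypothesis that $2^{-n/2}\sqrt{\Delta(T)}$ is an odd square-free integer applies equally well to $T'$. In particular, both $T$ and $T'$ satisfy the hypothesis of Theorem~\ref{main}.

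Next I would apply Theorem~\ref{main} directly to $\tilde T$. Since $T$ satisfies the hypothesis, the theorem guarantees that $\tilde T$ is DGS, meaning that any signed graph having the same generalized spectrum as $\tilde T$ must be isomorphic to $\tilde T$ as a signed graph. Since $\tilde T'$ has, by assumption, the same generalized spectrum as $\tilde T$, this forces $\tilde T \cong \tilde T'$ as signed graphs. An isomorphism of signed graphs is in particular an isomorphism of the underlying unsigned graphs, so we conclude $T \cong T'$, contradicting the non-isomorphism hypothesis on the pair $(T,T')$.

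I do not expect any genuine obstacle in this argument. The only small piece of bookkeeping is to confirm that ``DGS'' for signed graphs, as used in Theorem~\ref{main}, indeed means uniqueness up to signed-graph isomorphism (and thus passes to isomorphism of the underlying graph), and to check that the invariant $2^{-n/2}\sqrt{\Delta(T)}$ depends only on the characteristic polynomial and hence is genuinely a cospectral invariant. Both points are transparent from the definitions given in the introduction and in Section~4, so the corollary amounts to applying Theorem~\ref{main} once and reading off the contradiction.
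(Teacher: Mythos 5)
Your argument is correct and is exactly the intended one: the paper states this corollary as an immediate consequence of Theorem~\ref{main} without writing out a proof, and your contradiction argument (apply DGS of the signed tree over $T$ to force $\tilde T\cong\tilde T'$ and hence $T\cong T'$) is the obvious way to make that precise. The remark about the discriminant being a cospectral invariant is true but not even needed, since you only invoke Theorem~\ref{main} for $T$.
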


\begin{exmp} \textup{Let $T$ and $T'$ be two cospectral non-isomorphic trees on 14 vertices (see Fig.~1) with a common irreducible characteristic polynomial
$$\phi(T)=\phi(T')=-1 + 16 x^2 - 79 x^4 + 157 x^6 - 143 x^8 + 63 x^{10} - 13 x^{12} + x^{14}.$$ It can be easily computed that $2^{-7}\sqrt{\Delta(T)}=2^{-7}\sqrt{\Delta(T')}=5\times11\times 4754599$, which
is odd and square-free. Thus, according to Theorem~\ref{main}, every signed tree with underlying graph $T$ (resp. $T'$) are DGS. In particular, no two signed trees with underlying graphs $T$ and $T'$ respectively are generalized cospectral.}
\begin{figure}
	\centering
	\includegraphics[width=.4\textwidth]{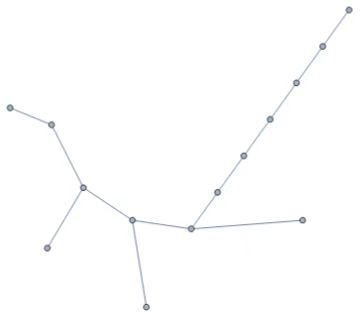}
	\includegraphics[width=.4\textwidth]{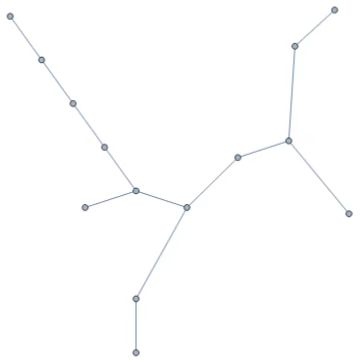} 
	\caption{A pair of cospectral non-isomorphic trees on 14 vertices} 
	\label{img}
\end{figure}
\end{exmp}
	
	Theorem~\ref{main} shows that whenever the underlying tree $T$ with $n$ vertices satisfies a simple arithmetic condition, then all the $2^{n-1}$ signed trees (including $T$ itself) whose underlying is $T$ is DGS. That is, the DGS property of all these signed trees only depends on the underlying graph $T$. This is somewhat unexpected, since given a pair of trees $T$ and $T'$, it seems time consuming even to check whether there exist two signed trees with underlying graphs $T$ and $T'$ respectively that are generalized cospectral; see Example 1.

We mention that Theorem~\ref{main} is the best possible in the sense that it is no longer true if $2^{-\frac n2}\sqrt{\Delta(T)}$ has a multiple odd prime factor. Moreover, the irreducibility assumption of the characteristic polynomial of the tree is essential which cannot be removed; see Remarks 1 and 2 in Section 4.

The rest of the paper is organized as follows. In Section 2, we give some preliminary results that will be needed in the proof of Theorem~\ref{main}. In Section 3, we give a structure theorem, which plays a key role in the paper. In Section 4, we present the proof of~Theorem \ref{main}. Conclusions and future work are given in Section 5.

\section{Preliminaries}


For the convenience of the reader, we give some preliminary results that will be needed later in the paper. For more results in spectral graphs theory, we refer to~\cite{BH,CDS}

 Let $G=(V,E)$ be a simple graph. A \emph{signed graph} is a graph obtained from $G$ by assigning a sign $1$ or $-1$ to every edge according to a mapping $\sigma: E\rightarrow\{1,-1\}$. We use $\Gamma=(G,\sigma)$ to denote a signed graph with \emph{underlying graph} $G$ and sign function (signature) $\sigma$.
 We call a signed graph a \emph{signed bipartite graph}, if its underlying graph is bipartite.

    Let $U$ be a subset of $V$ such that $(U,V\setminus U)$ is a partition of $V$. A \emph{switching} w.r.t. $U$ (or $V \setminus U$) is an operation that changes all the signs of edges between $U$ and $V\setminus U$, while keeps the others unchanged.
         Two signed graphs $\Gamma$ and $\Gamma'$ are \emph{switching-equivalent} if $\Gamma'$ can be obtained from $\Gamma$ by a switching operation, or equivalently, there exists a diagonal matrix $D$ with all diagonal entry $\pm 1$ such that $DA(\Gamma)D=A(\Gamma')$.
     A signed graph is \emph{balanced} if every cycle contains an even number of edges with sign -1. It is well-known that a signed graph is balanced
    if and only it is switching equivalent to a unsigned graph.

Let $\Gamma$ be a signed graph with adjacency matrix $A(\Gamma)$. The \emph{characteristic polynomial} of $\Gamma$ is defined as the characteristic polynomial of $A(\Gamma)$, i.e.,
$\phi(\Gamma;x)=\det(x I-A(\Gamma))$, where $I$ is the identity matrix. Two signed graphs $\Gamma$ and $\Gamma'$ with adjacency matrices $A(\Gamma)$ and $A(\Gamma')$ respectively are called \emph{generalized cospectral} if $$\det(xI-A(\Gamma))=\det(xI-A(\Gamma'))~{\rm and}~\det(xI-(J-I-A(\Gamma)))=\det(xI-(J-I-A(\Gamma'))),$$
where $J$ is the all-one matrix and $J-I-A(\Gamma)$ formally denotes the complement of $\Gamma$ (it is indeed the complement of $\Gamma$ if every edge of $\Gamma$ has been assigned a positive sign +1).
A signed graph $\Gamma$ is said to be \emph{determined by the generalized spectrum} (DGS for short), if any signed graph that is generalized cospectral with $\Gamma$ is isomorphic to $\Gamma$.

A polynomial $f(x)\in{\mathbb{Q}[x]}$ is \emph{irreducible} if it cannot be factored into two polynomials with rational coefficients of lower degree. Let $f(x)\in{\mathbb{Q}[x]}$ be an irreducible polynomial with degree $n$ and $\alpha$ be one of its root. Then $\mathbb{Q}(\alpha)=\{c_0+c_1\alpha+\cdots+c_{n-1}\alpha^{n-1}: c_i\in {\mathbb{Q}}, ~0\leq i\leq n-1\}$ is a \emph{number field} which is isomorphic to $\mathbb{Q}[x]/(f(x))$ and is obtained by adding $\alpha$ to $\mathbb{Q}$; see e.g.~\cite{Lang}.

An orthogonal matrix $Q$ is a square matrix such that $Q^{\T}Q=I_n$. It is called \emph{rational} if every entry of $Q$ is a rational number, and \emph{regular} if each row sum of $Q$ is $1$, i.e., $Qe=e$, where $e$ is the all-one column vector. Denote by RO$_{n}(\mathbb{Q})$ the set of all $n$ by $n$ regular orthogonal matrices with rational entries.
	
In 2006, Wang and Xu~\cite{WX1} initiated the study of the generalized spectral characterization of graphs. For two generalized cospectral graphs $G$ and $H$, they obtained the following result (see also~\cite{JN}), which plays a fundamental role in their method.
\begin{them}[\cite{JN},\cite{WX1}]\label{rational} Let $G$ be a  graph. Then there exists a graph $H$ such that $G$ and $H$ are generalized cospectral if and only if there exists a regular orthogonal matrix $Q$ such that
\begin{equation}\label{E1.1}
Q^{\T}A(G)Q=A(H).
\end{equation}
Moreover, if $\det W(G)\neq 0$, then $Q\in \textup{RO}_{n}(\mathbb{Q})$ is unique and $Q=W(G)W^{-1}(H)$.
\end{them}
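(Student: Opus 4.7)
The plan is to verify both directions of the equivalence, and then to extract the uniqueness and rationality of $Q$ from the assumption $\det W(G)\ne 0$.

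For the easier implication, suppose $Q$ is a regular orthogonal matrix with $Q^{\T}A(G)Q=A(H)$. From $Qe=e$ and $Q^{\T}Q=I$ one obtains $Q^{\T}e=e$, whence $Q^{\T}JQ=Q^{\T}ee^{\T}Q=ee^{\T}=J$. Combined with $Q^{\T}A(G)Q=A(H)$, this gives $Q^{\T}(J-I-A(G))Q=J-I-A(H)$, so that both $A(G)$ is orthogonally similar to $A(H)$ and $J-I-A(G)$ is orthogonally similar to $J-I-A(H)$. In particular $G$ and $H$ are generalized cospectral.

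The substantive direction is the converse, which I would handle via the \emph{walk subspace} $\mathcal{W}_G:=\operatorname{span}\{A(G)^i e : i\ge 0\}$ and its orthogonal complement. Since $A(G)$ is symmetric, both $\mathcal{W}_G$ and $\mathcal{W}_G^{\perp}$ are $A(G)$-invariant, and similarly for $H$. The central classical fact I would invoke is that the characteristic polynomial of the complement is determined by $\phi(G;x)$ together with the \emph{main polynomial} $m_{A(G)}(x)$, defined as the minimal polynomial of the restriction $A(G)|_{\mathcal{W}_G}$; this follows from the standard manipulation of the resolvent identity for $e^{\T}(xI-A(G))^{-1}e$. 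Consequently, generalized cospectrality of $G$ and $H$ forces not only $\phi(G;x)=\phi(H;x)$ but also the equality of main polynomials and main spectra. Because $e$ is by construction a cyclic vector for $A(G)|_{\mathcal{W}_G}$, the triples $(\mathcal{W}_G,A(G)|_{\mathcal{W}_G},e)$ and $(\mathcal{W}_H,A(H)|_{\mathcal{W}_H},e)$ are canonically isometric via a unique orthogonal isomorphism sending $e$ to $e$ and intertwining the two symmetric restrictions. On the orthogonal complements, the restrictions are symmetric matrices with equal characteristic polynomials (the non-main spectra must then coincide), hence orthogonally similar. Gluing the two isomorphisms produces a regular orthogonal $Q$ with $Q^{\T}A(G)Q=A(H)$.

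For the uniqueness assertion, assume $\det W(G)\ne 0$, so that $e,A(G)e,\ldots,A(G)^{n-1}e$ forms a basis of $\mathbb{R}^n$. For any regular orthogonal $Q$ with $Q^{\T}A(G)Q=A(H)$ one has $A(G)Q=QA(H)$ together with $Qe=e$; an induction on $i$ then yields $QA(H)^ie=A(G)^ie$ for every $i\ge 0$. Column-wise this is $QW(H)=W(G)$. In particular $W(H)$ is nonsingular, and $Q=W(G)W(H)^{-1}$; since $W(G)$ and $W(H)$ are integer matrices, $Q\in\operatorname{RO}_n(\mathbb{Q})$, and the closed formula makes it unique.

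The main obstacle is the converse direction: one must promote the scalar information encoded in the two pairs of characteristic polynomials to an explicit orthogonal intertwiner that preserves the all-one vector. The key observation that makes this possible is that generalized cospectrality is equivalent to the combined equality of ordinary spectra and main spectra, after which cyclicity of $e$ on the walk subspace supplies the required isometry on $\mathcal{W}_G$ automatically, and the complement part is routine.
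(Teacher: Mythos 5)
This is a quoted result (attributed to \cite{JN} and \cite{WX1}); the paper itself gives no proof of it, so there is no internal argument to compare against. Your proof is correct. The forward direction and the uniqueness part are exactly the standard computations (in particular $QW(H)=W(G)$ is precisely how the formula $Q=W(G)W^{-1}(H)$ is derived in \cite{WX1}). For the substantive converse, the classical proof of Johnson and Newman works with the spectral decomposition: generalized cospectrality forces $e^{\T}(xI-A(G))^{-1}e=e^{\T}(xI-A(H))^{-1}e$ (this identity is reproved in the present paper as Lemma 3.1), which says that for each eigenvalue $\mu$ the projections of $e$ onto the $\mu$-eigenspaces of $A(G)$ and $A(H)$ have equal norms; one then builds $Q$ eigenspace by eigenspace, sending one projection of $e$ to the other. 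Your walk-subspace formulation is an equivalent repackaging of the same information: the equality of the resolvent functions is exactly the equality of all moments $e^{\T}A(G)^{k}e=e^{\T}A(H)^{k}e$, which is what makes your ``canonical isometry'' between the cyclic triples $(\mathcal{W}_G,A(G)|_{\mathcal{W}_G},e)$ and $(\mathcal{W}_H,A(H)|_{\mathcal{W}_H},e)$ orthogonal --- a point you should state explicitly, since orthogonality of the intertwiner is precisely the claim that the Gram matrices $\bigl(e^{\T}A^{i+j}e\bigr)_{i,j}$ coincide (this also guarantees $\dim\mathcal{W}_G=\dim\mathcal{W}_H$, so that the complementary blocks match in size). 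With that detail filled in, the gluing argument is complete and buys you a slightly cleaner separation between the part of $Q$ forced by $e$ (unique on $\mathcal{W}_G$) and the part that is free (arbitrary orthogonal similarity on the complement), which also makes transparent why $Q$ is unique exactly when $W(G)$ is nonsingular, i.e.\ when $\mathcal{W}_G=\mathbb{R}^n$.
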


A graph $G$ with $\det W(G)\neq 0$ is called \emph{controllable} (see~\cite{Godsil}), denoted by $\mathcal{G}_n$ the set of all controllable graphs on $n$ vertices.  For a graph $G\in\mathcal{G}_n$, define
$$ \mathcal{Q}(G):=\{Q\in \textup{RO}_{n}(\mathbb{Q}):~Q^{\T}A(G)Q=A(H)~ {\rm for~ some~ graph}~ H \}.$$
Then according to Theorem~\ref{rational}, it is easy to obtain the following

\begin{them}[\cite{WX1}]\label{permutationdgs}
Let $G$ be a controllable graph. Then $G$ is DGS if and only if the set $\mathcal{Q}(G)$ contains only permutation matrices.
\end{them}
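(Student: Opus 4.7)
The plan is to derive both implications directly from Theorem~\ref{rational}, exploiting the two features it provides simultaneously: the equivalence between generalized cospectrality of $G$ and $H$ and the existence of a regular orthogonal similarity $Q^{\T}A(G)Q = A(H)$, together with the uniqueness clause guaranteed by controllability, which identifies such a $Q$ as $W(G)W(H)^{-1} \in \textup{RO}_n(\mathbb{Q})$.

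For the ``only if'' direction I would fix an arbitrary $Q\in\mathcal{Q}(G)$ and let $H$ be the graph defined by $A(H) = Q^{\T}A(G)Q$ (which exists by the very definition of $\mathcal{Q}(G)$). Theorem~\ref{rational} then says $G$ and $H$ are generalized cospectral, so the DGS assumption produces a permutation matrix $P$ with $P^{\T}A(G)P = A(H)$. Since every permutation matrix is rational, orthogonal, and satisfies $Pe=e$, we have $P\in\textup{RO}_n(\mathbb{Q})$. The isomorphic copy $H$ inherits controllability from $G$ (because $W(H) = P^{\T}W(G)$, so $|\det W(H)|=|\det W(G)|\neq 0$), hence the uniqueness clause of Theorem~\ref{rational} applies to $H$ and forces $Q=P$. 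Thus $Q$ is a permutation matrix.

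For the ``if'' direction, suppose $\mathcal{Q}(G)$ contains only permutation matrices and let $H$ be any graph generalized cospectral with $G$. Theorem~\ref{rational} supplies a regular orthogonal $Q$ with $Q^{\T}A(G)Q = A(H)$; controllability of $G$ upgrades this $Q$ to an element of $\textup{RO}_n(\mathbb{Q})$ via the explicit formula $Q = W(G)W(H)^{-1}$, so $Q\in\mathcal{Q}(G)$. By hypothesis, $Q$ is a permutation matrix, and $P^{\T}A(G)P = A(H)$ for a permutation matrix $P$ is exactly the statement that $G$ and $H$ are isomorphic. Hence $G$ is DGS.

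Neither direction presents a genuine technical obstacle, since each is essentially a one-step repackaging of Theorem~\ref{rational}. The only place that warrants a quick verification is the stability of controllability under graph isomorphism in the first direction; beyond that, the result is a clean corollary of the rational orthogonal matrix characterization of generalized cospectrality.
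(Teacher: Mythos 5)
Your argument is correct and follows exactly the route the paper intends: the paper gives no explicit proof, stating only that the result follows easily from Theorem~\ref{rational}, and your two directions (uniqueness of the regular rational orthogonal matrix forcing $Q=P$ in one direction, membership of the Wang--Xu matrix $W(G)W(H)^{-1}$ in $\mathcal{Q}(G)$ in the other) are precisely that easy derivation. The side check that $H$ inherits controllability, so that $W(H)$ is invertible and the uniqueness clause applies, is the right detail to verify and is handled correctly.
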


	The above theorems extend naturally to signed graphs. By Theorem~\ref{permutationdgs}, finding out the possible structure of all $Q\in \mathcal{Q}(G)$ is a key to determine whether a (signed) graph $G$ is DGS.\\

\noindent
\textbf{Notations}: We use $e_n$ (or $e$ if there is no confusion arises) to denote an $n$-dimensional column all-one vector, and $J$ the all-one matrix. For a vector $\alpha=(a_1,a_2,\ldots,a_n)^{\T}\in{\mathbb{R}^n}$, we use $||\alpha||_2=(a_1^2+a_2^2+\cdots+a_n^2)^{1/2}$ to denote the Euclidean norm of $\alpha$.

\section{A structure theorem for $Q$}

The key observation of this paper is the following theorem which shows that for two generalized cospectral signed bipartite graphs with a common irreducible characteristic polynomial, the regular rational orthogonal matrix carried out the similarity of their adjacency matrices has a special structure.
	
	\begin{them}\label{lm1}
		Let $\Gamma$ and $\tilde{\Gamma}$ be two generalized cospectral signed bipartite graphs with a common irreducible characteristic polynomial $\phi(x)$ over $\mathbb{Q}$. Suppose that the adjacency matrices of $\Gamma$ and $\tilde{\Gamma}$ are given as follows, respectively:
  $$A=A(\Gamma)=\left[
		\begin{matrix}
		O & M\\
		M^{\rm T} & O
		\end{matrix}
		\right],~\tilde{A}=A(\tilde{\Gamma})=\left[
		\begin{matrix}
		O &  \tilde{M}\\
		\tilde{M}^{\rm T} & O
		\end{matrix}
		\right].$$
Then there exists a regular orthogonal matrix $Q$ such that $Q^{\rm T}AQ=\tilde{A}$, where $$Q=\left[
		\begin{matrix}
		Q_1 & O\\
		O & Q_2
		\end{matrix}
		\right]~ {\rm or} ~Q=\left[
		\begin{matrix}
		O & Q_1\\
		Q_2 & O
		\end{matrix}
		\right]$$
with $Q_1$ and $Q_2$ being regular rational orthogonal matrices, respectively.
	\end{them}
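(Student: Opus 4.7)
The plan is to combine the bipartite block structure of $A$ and $\tilde A$ with a Galois-theoretic analysis on the eigenbases of the positive-semidefinite blocks $MM^T$ and $M^TM$, using the irreducibility of $\phi(x)$.

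Preliminary reduction. Since $\phi$ is irreducible and the spectrum of a bipartite graph is symmetric about $0$, $\phi(x)=(-1)^n\phi(-x)$. Odd $n$ would force $\phi(0)=0$ and contradict irreducibility (for $n\geq 2$), so $n$ is even and $\phi(x)=\psi(x^2)$ for an irreducible $\psi\in\mathbb{Q}[x]$ of degree $n/2$. A rank argument on $A$ then shows that both parts of the bipartition have size $n/2$ and that $M,\tilde M$ are invertible. Hence $MM^T$, $M^TM$, $\tilde M\tilde M^T$ and $\tilde M^T\tilde M$ are rational symmetric matrices with common irreducible characteristic polynomial $\psi$ and simple spectrum $\{\lambda_1^2,\dots,\lambda_{n/2}^2\}$.

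Structure from the intertwiner. Write $Q=\begin{pmatrix}Q_{11}&Q_{12}\\Q_{21}&Q_{22}\end{pmatrix}$ conformally. Expanding $AQ=Q\tilde A$ gives four block identities, and squaring ($A^2Q=QA^2$) yields that $Q_{11}$ intertwines $MM^T$ with $\tilde M\tilde M^T$, $Q_{22}$ intertwines $M^TM$ with $\tilde M^T\tilde M$, and $Q_{12},Q_{21}$ perform the ``crossed'' intertwinings. Choose eigenvectors $a_i,\tilde a_i\in\mathbb{Q}(\lambda_i^2)^{n/2}$ of $MM^T,\tilde M\tilde M^T$ for $\lambda_i^2$, and let $b_i=M^Ta_i$, $\tilde b_i=\tilde M^T\tilde a_i$, which lie in $\mathbb{Q}(\lambda_i^2)^{n/2}$ and are eigenvectors of $M^TM,\tilde M^T\tilde M$. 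Since each eigenspace is one-dimensional, there are scalars $c_i,d_i,e_i,f_i\in\mathbb{Q}(\lambda_i^2)$ with $Q_{11}\tilde a_i=c_ia_i$, $Q_{22}\tilde b_i=d_ib_i$, $Q_{12}\tilde b_i=e_ia_i$, $Q_{21}\tilde a_i=f_ib_i$; substituting back into $MQ_{22}=Q_{11}\tilde M$ and $MQ_{21}=Q_{12}\tilde M^T$ shows $c_i=d_i$ and $e_i=f_i$.

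Rigidity via orthogonality and Galois. From $Q^TQ=I$, evaluating the $(1,1)$ and $(1,2)$ blocks on $\tilde a_i$ and using $\mathbb{R}$-orthogonality of the eigenbases of the symmetric matrices yields $c_i^2\|a_i\|^2+e_i^2\|b_i\|^2=\|\tilde a_i\|^2>0$ and $c_ie_i=0$ for every $i$, so exactly one of $c_i,e_i$ is nonzero. Now I invoke Galois: since $\psi$ is irreducible, $\{\lambda_1^2,\dots,\lambda_{n/2}^2\}$ is one $\mathrm{Gal}(\overline{\mathbb{Q}}/\mathbb{Q})$-orbit; choosing $\sigma_j$ with $\sigma_j(\lambda_1^2)=\lambda_j^2$ and setting $a_j=\sigma_j(a_1)$, $\tilde a_j=\sigma_j(\tilde a_1)$, etc., the rationality of $Q_{11}$ and $Q_{12}$ forces $c_j=\sigma_j(c_1)$ and $e_j=\sigma_j(e_1)$ exactly, so $c_1=0\Leftrightarrow c_j=0$ uniformly in $j$ and likewise for $e$. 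Only two scenarios survive: either all $c_i\neq 0$ and all $e_i=0$ (forcing $Q_{12}=Q_{21}=0$ and giving the block-diagonal form), or all $e_i\neq 0$ and all $c_i=0$ (forcing $Q_{11}=Q_{22}=0$ and giving the block-antidiagonal form). Reading $Q^TQ=I$ and $Qe=e$ on the two surviving blocks then shows $Q_1,Q_2$ are regular rational orthogonal.

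The main obstacle is the Galois step: one must simultaneously choose $a_i,b_i,\tilde a_i,\tilde b_i$ in $\mathbb{Q}(\lambda_i^2)^{n/2}$ (available because the eigenspaces of each rational symmetric matrix with irreducible characteristic polynomial are one-dimensional and cut out by rational equations over $\mathbb{Q}(\lambda_i^2)$), and the bipartite reduction $\phi(x)=\psi(x^2)$ is essential because it keeps everything over $\mathbb{Q}(\lambda_i^2)$ rather than over the larger field $\mathbb{Q}(\lambda_i)$, so that $c_i,e_i$ are algebraic numbers to which the Galois transport applies cleanly.
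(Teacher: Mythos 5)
Your argument is correct, but it reaches the block structure by a genuinely different route than the paper. The paper never decomposes a given $Q$ into blocks: it works with the normalized eigenvectors $\frac{1}{\sqrt{2}}(u_i^{\T},\pm v_i^{\T})^{\T}$ of $A$ and $\tilde A$, extracts from the generalized-cospectrality identity $e^{\T}(\lambda I-A)^{-1}e=e^{\T}(\lambda I-\tilde A)^{-1}e$ the constraints $(e^{\T}u_i\pm e^{\T}v_i)^2=(e^{\T}\tilde u_i\pm e^{\T}\tilde v_i)^2$, resolves the resulting sign ambiguities case by case, and then \emph{constructs} $Q=R\tilde R^{\T}$ in the desired form, checking regularity at the end. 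You instead take the $Q$ supplied by Theorem~\ref{rational}, read off the intertwining relations $MM^{\T}Q_{11}=Q_{11}\tilde M\tilde M^{\T}$ etc., and use $Q^{\T}Q=I$ on each one-dimensional eigenspace to get the per-eigenvalue dichotomy ``$c_ie_i=0$, not both zero''. The one step where the two proofs genuinely coincide is the propagation of this dichotomy uniformly over $i$: your Galois transport $c_j=\sigma_j(c_1)$ is exactly the paper's ``$\Phi(\lambda_1)=0$ and $\phi$ irreducible imply $\phi\mid\Phi$, hence $\Phi(\lambda_i)=0$ for all $i$'' (Lemma~\ref{poly} is precisely the statement that the eigenvectors are Galois conjugates of one another). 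What your route buys: it avoids the all-ones vector until the final regularity check, dispenses with the bookkeeping of the signs $\tau_i,\sigma_i,\epsilon_i$, and shows directly that \emph{the} unique $Q$ has the block form rather than constructing one and invoking uniqueness. What it costs: you must state up front that irreducibility implies controllability, so that Theorem~\ref{rational} delivers a \emph{rational} $Q$ --- your Galois step genuinely needs rationality of $Q_{11},Q_{12}$ (the paper needs the same fact, in Corollary~3.2). Two harmless slips: from $MQ_{21}=Q_{12}\tilde M^{\T}$ one gets $e_i=\lambda_i^2 f_i$ rather than $e_i=f_i$ (only $e_i=0\Leftrightarrow f_i=0$ is used), and the $(1,1)$-block identity reads $c_i^2\|a_i\|^2+f_i^2\|b_i\|^2=\|\tilde a_i\|^2$; neither affects the conclusion.
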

\begin{coro}The matrix $Q$ in Theorem~\ref{lm1} is the unique rational orthogonal matrix such that $Q^{\T}AQ=\tilde{A}$.
\end{coro}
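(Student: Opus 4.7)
The plan is to derive the corollary directly from the uniqueness clause of Theorem~\ref{rational} (in its signed-graph analogue, which the paper asserts extends verbatim). The only content that needs verification is that the walk matrix $W(\Gamma)=[e,Ae,\ldots,A^{n-1}e]$ has nonzero determinant; once that is in hand, Theorem~\ref{rational} supplies a unique $Q\in\textup{RO}_n(\mathbb{Q})$ realizing $Q^{\T}AQ=\tilde{A}$, given explicitly by $W(\Gamma)W(\tilde{\Gamma})^{-1}$, and the $Q$ produced by Theorem~\ref{lm1} must coincide with it.

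The key step is therefore to show that $\det W(\Gamma)\neq 0$, and the natural way to do this is to exploit irreducibility of $\phi(x)$. Since $\phi(x)$ has degree $n$ and is the characteristic polynomial of $A$, irreducibility over $\mathbb{Q}$ forces it to coincide with the minimal polynomial of $A$. Let $g(x)\in\mathbb{Q}[x]$ be the monic generator of the annihilator ideal of $e$ under the $\mathbb{Q}[A]$-action. Then $g(x)$ divides the minimal polynomial $\phi(x)$, and irreducibility leaves only the possibilities $g=1$ or $g=\phi$. The case $g=1$ would force $e=0$, which is absurd, so $g=\phi$. This means $e,Ae,\ldots,A^{n-1}e$ are linearly independent over $\mathbb{Q}$, i.e., $\det W(\Gamma)\neq 0$.

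With controllability of $\Gamma$ established, the uniqueness clause of Theorem~\ref{rational} finishes the proof: any $Q\in\textup{RO}_n(\mathbb{Q})$ satisfying $Q^{\T}AQ=\tilde{A}$ must equal $W(\Gamma)W(\tilde{\Gamma})^{-1}$. Since Theorem~\ref{lm1} produces one such $Q$, it is necessarily this one.

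I do not foresee a genuine obstacle. The corollary is essentially the slogan ``\emph{irreducible characteristic polynomial $\Rightarrow$ controllable}'' combined with a one-line appeal to Theorem~\ref{rational}. The only thing one has to be careful about is that Theorem~\ref{rational} is stated for ordinary graphs while we are applying it to signed graphs, but the paper has already noted that the signed-graph version holds by the same proof, so no additional work is required on that front.
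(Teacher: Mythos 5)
Your proposal is correct and follows the same route as the paper: the paper's proof is simply the observation that irreducibility of $\phi(x)$ implies $\Gamma$ is controllable, after which the uniqueness clause of Theorem~\ref{rational} applies. Your annihilator-ideal argument just fills in the detail of why irreducibility forces $\det W(\Gamma)\neq 0$, which the paper takes for granted.
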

\begin{proof} The irreducibility assumption of the characteristic polynomial of $A$ implies that $\Gamma$ is controllable.
Then the corollary follows immediately from Theorem~\ref{rational}.

\end{proof}

	To give the proof of Theorem \ref{lm1}, we need several lemmas below.

\begin{lm}\label{LL1} Let $\Gamma$ and $\tilde{\Gamma}$ be two generalized cospectral signed graphs with adjacency matrices $A$ and $\tilde{A}$, respectively.
Then $e^{\rm T} (\lambda I-A)^{-1}e=e^{\rm T} (\lambda I-\tilde{A})^{-1}e$.
\begin{proof} It can be easily computed that
 \begin{eqnarray*}
&&\det(\lambda I-(A+t J))\\
&=&\det(\lambda I-A)\det (I-t(\lambda I-A)^{-1}ee^{\rm T})\\
&=&\det(\lambda I-A)(1-t e^{\rm T}(\lambda I-A)^{-1}e).
\end{eqnarray*}
 Similarly, $\det(\lambda I-(\tilde{A}+t J))=\det(\lambda I-\tilde{A})(1-t e^{\rm T}(\lambda I-\tilde{A})^{-1}e)$. Thus, the lemma follows.

\end{proof}

\end{lm}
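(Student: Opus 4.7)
The plan is to exploit that $J=ee^{\rm T}$ has rank one, so that adding $tJ$ to $A$ is a rank-one perturbation whose determinant can be evaluated cleanly via the matrix determinant lemma. Specifically, I would first establish the identity
\[
\det(\lambda I-(A+tJ))=\phi(\Gamma;\lambda)\bigl(1-t\,e^{\rm T}(\lambda I-A)^{-1}e\bigr),
\]
together with its analogue for $\tilde A$. Each side is a polynomial of degree at most one in $t$, so to conclude that the two families coincide (and hence, after cancelling the common characteristic polynomial, that the two scalar resolvents agree), it suffices to exhibit two distinct values of $t$ at which $\det(\lambda I-(A+tJ))=\det(\lambda I-(\tilde A+tJ))$.

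At $t=0$ both sides reduce to the characteristic polynomial, and the cospectrality $\phi(\Gamma;\lambda)=\phi(\tilde\Gamma;\lambda)$ disposes of this case for free. For the second value I would take $t=-1$. A short direct computation gives $\phi(J-I-A;\nu)=\det((\nu+1)I-(J-A))$; setting $\mu=-\nu-1$ and extracting a sign $(-1)^n$ rewrites this as $\det(\mu I-A+J)=\det(\mu I-(A+(-1)J))$, i.e.\ precisely the $t=-1$ evaluation of the left-hand side above. Hence the cospectrality of the complements $J-I-A$ and $J-I-\tilde A$ translates exactly into the agreement of the two determinants at $t=-1$.

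Having matched two linear-in-$t$ polynomials at $t=0$ and $t=-1$, they coincide identically in $t$. Comparing the coefficients of $t$ in the matrix-determinant-lemma identity (equivalently, dividing through by the common nonzero factor $\phi(\Gamma;\lambda)=\phi(\tilde\Gamma;\lambda)$, valid for all but finitely many $\lambda$) yields the desired identity of rational functions $e^{\rm T}(\lambda I-A)^{-1}e=e^{\rm T}(\lambda I-\tilde A)^{-1}e$.

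The argument is short and presents no serious obstacle. The only slightly non-obvious bookkeeping is the sign-and-shift identity $\phi(A-J;\mu)=(-1)^n\phi(J-I-A;-\mu-1)$, which is what lets us realise the complement cospectrality as the $t=-1$ evaluation of the same one-parameter family; the rest, namely the matrix determinant lemma and the interpolation of a degree-one polynomial in $t$, is entirely routine.
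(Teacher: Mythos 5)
Your proof is correct and follows essentially the same route as the paper: both rest on the matrix determinant lemma applied to the rank-one perturbation $A+tJ$, with the two generalized-cospectrality conditions forcing the two degree-one-in-$t$ polynomials to agree. You merely make explicit the step the paper leaves implicit (agreement at $t=0$ and $t=-1$ via the sign-and-shift identity for the complement), which is a welcome clarification but not a different argument.
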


\begin{lm}[\cite{GR}]\label{LL2} $(\lambda I-A)^{-1}=\sum_{i=1}^{n}\frac{\xi_i\xi_i^{\rm T}}{\lambda-\lambda_i}$, where $\xi_i$'s are normalized
eigenvectors of $A$ associated with $\lambda_i$, for $\leq i\leq n$.

\end{lm}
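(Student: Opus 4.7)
The plan is to derive this from the spectral theorem for real symmetric matrices. Since $A = A(\Gamma)$ is symmetric (adjacency matrices of signed graphs are symmetric by definition, as $\sigma$ is defined on edges), there exists an orthonormal basis of eigenvectors $\xi_1,\dots,\xi_n$ with $A\xi_i=\lambda_i\xi_i$ and $\xi_i^{\T}\xi_j=\delta_{ij}$. The first step is to write the two resolution-of-the-identity identities that fall out of orthonormality:
\[
I=\sum_{i=1}^n \xi_i\xi_i^{\T},\qquad A=\sum_{i=1}^n \lambda_i\,\xi_i\xi_i^{\T}.
\]
These hold because both sides agree on the orthonormal basis $\{\xi_j\}$: applying either side to $\xi_j$ gives $\xi_j$ and $\lambda_j\xi_j$ respectively.

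Combining the two identities I would then get
\[
\lambda I - A \;=\; \sum_{i=1}^n (\lambda-\lambda_i)\,\xi_i\xi_i^{\T}.
\]
Since each $\xi_i\xi_i^{\T}$ is a rank-one orthogonal projector and the projectors are pairwise orthogonal ($\xi_i\xi_i^{\T}\cdot\xi_j\xi_j^{\T}=0$ whenever $i\neq j$, again by orthonormality), one can invert this sum term-by-term on the non-trivial invariant subspaces. Concretely, set
\[
B\;:=\;\sum_{i=1}^n \frac{1}{\lambda-\lambda_i}\,\xi_i\xi_i^{\T},
\]
which is well-defined whenever $\lambda$ is not an eigenvalue of $A$.

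The final step is a direct verification that $B=(\lambda I-A)^{-1}$. Multiplying out,
\[
(\lambda I-A)\,B\;=\;\sum_{i,j=1}^n \frac{\lambda-\lambda_i}{\lambda-\lambda_j}\,\xi_i\xi_i^{\T}\xi_j\xi_j^{\T}\;=\;\sum_{i=1}^n \xi_i\xi_i^{\T}\;=\;I,
\]
where the middle equality uses $\xi_i^{\T}\xi_j=\delta_{ij}$ to collapse the double sum to the diagonal. This establishes the claimed identity. There is no genuine obstacle here: the lemma is a standard consequence of the spectral theorem, and the only things to keep track of are (i) the symmetry of $A$, which justifies the existence of an orthonormal eigenbasis, and (ii) the hypothesis that $\lambda$ avoid the spectrum of $A$, so that every denominator $\lambda-\lambda_i$ is nonzero.
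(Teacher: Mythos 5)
Your proof is correct and is the standard argument: spectral decomposition of the symmetric matrix $A$ into mutually orthogonal rank-one projectors $\xi_i\xi_i^{\T}$, followed by term-by-term inversion of $\lambda I-A=\sum_i(\lambda-\lambda_i)\xi_i\xi_i^{\T}$ and a direct check that the product collapses to $I$. The paper itself offers no proof of this lemma --- it is quoted from Godsil and Royle --- so there is nothing to compare against; your derivation is exactly the expected one, and you are right to flag the two hypotheses that make it work (symmetry of $A$, hence an orthonormal eigenbasis, and $\lambda$ lying outside the spectrum so that every denominator is nonzero).
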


\begin{lm}[\cite{wang0}]\label{poly} Let $A=(a_{ij})$ be a symmetric integral matrix with an irreducible characteristic polynomial $\phi(x)$.
Let $\lambda_1,\ldots,\lambda_n$ be the distinct eigenvalues of $A$. Then there exist polynomials
$\phi_i(x)\in{\mathbb{Q}[x]}$ with $\deg \phi_i<n$ such that the eigenvectors $\xi_i$ of $A$ associated with $\lambda_i$
can be expressed as
$${\xi}_i=(\phi_1(\lambda_i),\phi_2(\lambda_i),\ldots,\phi_n(\lambda_i))^{\rm T}$$ for $1\leq i\leq n$.
 \end{lm}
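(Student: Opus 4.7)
The plan is to exploit the irreducibility of $\phi(x)$ over $\mathbb{Q}$ through Galois conjugation: first produce one eigenvector of $A$ for a chosen eigenvalue whose entries lie in the number field $\mathbb{Q}(\lambda_1)$, and then obtain the remaining eigenvectors by applying the $\mathbb{Q}$-algebra embeddings of $\mathbb{Q}(\lambda_1)$ into $\mathbb{C}$. Since $\phi$ is irreducible over $\mathbb{Q}$ of degree $n$ in characteristic zero, it is separable, so the roots $\lambda_1,\ldots,\lambda_n$ are pairwise distinct, and $\mathbb{Q}(\lambda_1)\cong\mathbb{Q}[x]/(\phi(x))$ is a number field of degree $n$ over $\mathbb{Q}$.

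First I would produce the seed eigenvector for $\lambda_1$. Because $\lambda_1$ is a simple eigenvalue, $\lambda_1 I-A$ has rank $n-1$, and so $\mathrm{adj}(\lambda_1 I-A)$ has rank exactly $1$ and in particular admits a nonzero column $\xi=(\xi_1,\ldots,\xi_n)^{\T}$. The identity $(\lambda I-A)\,\mathrm{adj}(\lambda I-A)=\phi(\lambda)I$, specialized at $\lambda=\lambda_1$, shows that $A\xi=\lambda_1\xi$. Each $\xi_j$ is a polynomial expression in $\lambda_1$ with integer coefficients (it is a signed cofactor of $\lambda_1 I-A$), and may be reduced modulo $\phi$ to give a unique representation $\xi_j=\phi_j(\lambda_1)$ with $\phi_j\in\mathbb{Q}[x]$ and $\deg\phi_j<n$, using that $\{1,\lambda_1,\ldots,\lambda_1^{n-1}\}$ is a $\mathbb{Q}$-basis of $\mathbb{Q}(\lambda_1)$. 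These $\phi_1,\ldots,\phi_n$ will be the polynomials claimed by the lemma.

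Finally I would transport this to every other eigenvalue by Galois conjugation. For each $i$, the assignment $\lambda_1\mapsto\lambda_i$ extends uniquely to a $\mathbb{Q}$-algebra embedding $\sigma_i:\mathbb{Q}(\lambda_1)\to\mathbb{C}$, since $\lambda_1$ and $\lambda_i$ share the minimal polynomial $\phi$. Applying $\sigma_i$ coordinate-wise to the equation $A\xi=\lambda_1\xi$ and using $\sigma_i(A)=A$ (as $A$ has integer entries) gives $A\,\sigma_i(\xi)=\lambda_i\,\sigma_i(\xi)$; moreover $\sigma_i(\xi)\neq 0$ because $\sigma_i$ is injective on the field $\mathbb{Q}(\lambda_1)$, so $\sigma_i(\xi)$ is a nonzero eigenvector for $\lambda_i$. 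Since $\sigma_i(\phi_j(\lambda_1))=\phi_j(\sigma_i(\lambda_1))=\phi_j(\lambda_i)$, the desired formula $\xi_i=(\phi_1(\lambda_i),\ldots,\phi_n(\lambda_i))^{\T}$ follows. The only mildly delicate point is the well-definedness of $\sigma_i$ together with the nontriviality of $\sigma_i(\xi)$; both use that $\phi$ is the common minimal polynomial of every $\lambda_i$ and that ring homomorphisms out of a field are injective, so there is no real obstacle once the Galois framework is set up.
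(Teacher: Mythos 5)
Your proof is correct and follows essentially the same strategy as the paper's: produce one eigenvector for $\lambda_1$ whose entries lie in $\mathbb{Q}(\lambda_1)$, write those entries as polynomials of degree $<n$ in $\lambda_1$, and use the irreducibility of $\phi$ to propagate the eigenvector equation to every conjugate $\lambda_i$ (the paper phrases this last step as ``the polynomial $\psi$ vanishing at $\lambda_1$ is divisible by $\phi$, hence vanishes at every $\lambda_i$'', which is your Galois-embedding argument in different clothing). The only notable differences are cosmetic: you construct the seed eigenvector from a nonzero column of $\mathrm{adj}(\lambda_1 I-A)$ where the paper invokes Gaussian elimination over the number field $\mathbb{Q}(\lambda_1)$, and you explicitly verify that the conjugated vectors are nonzero, a point the paper leaves implicit.
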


\begin{proof}
 Let $\lambda_1$ be an eigenvalue of $A$ with corresponding eigenvector $\xi_1$.
 Consider the linear system of equations $(\lambda_1 I-A)\xi_1=0$. By Gaussian elimination, there exist $x_i\in{\mathbb{Q}(\lambda_1)}$ such that
$\xi_1=(x_1,x_2,\ldots,x_n)^{\T}$.  Note $\mathbb{Q}(\lambda_1)$ is a number field.
There exist polynomials
$\phi_i(x)\in{\mathbb{Q}[x]}$ with $\deg \phi_i<n$ such that $x_i=\phi_i(\lambda_1)$.

By the $k$-th equation of $(\lambda_1 I-A)\xi_1=0$, we have $\psi(\lambda_1):=\sum_{j=1}^na_{k,j}\phi_j(\lambda_1)-\lambda_1\phi_j(\lambda_1)=0$, for $1\leq k\leq n$.
Note $\psi(x)\in{\mathbb{Q}[x]}$ and $\psi(\lambda_1)=0$. By the irreducibility of $\phi(x)$, we have $\phi(x)$ divides $\psi(x)$.
 Thus, $\psi(\lambda_i)=0$ for for $1\leq i\leq n$, and ${\xi}_i=(\phi_1(\lambda_i),\phi_2(\lambda_i),\ldots,\phi_n(\lambda_i))^{\rm T}$ is an eigenvector associated with $\lambda_i$.

\end{proof}

Next, we collect some simple facts about the relationships of eigenvalues/eigenvectors between the adjacency matrix $A$ of a signed bipartite graph $\Gamma$ and its bipartite-adjacency matrix $M$.

\begin{lm}\label{LL4}
		Let $\Gamma$ be a signed bipartite graph with an irreducible characteristic polynomial over $\mathbb{Q}$. Let the adjacency matrix of $\Gamma$ be $A=A(\Gamma)=\left[\begin{matrix}
		O & M \\
		M^{\T}&O
		\end{matrix}\right]$. Suppose that $\left[\begin{matrix}
		u \\
		v
		\end{matrix}\right]$ is an eigenvector of $A$ associated with an eigenvalue $\lambda$.
Then
\begin{enumerate}
\item $\lambda^2$ is an eigenvalue of $MM^{\T}$ and $M^{\T}M$ with corresponding eigenvectors $u$ and $v$, respectively;

\item $u$ and $v$ have the same length, i.e., $||u||_2=||v||_2$;
\item    $\left[\begin{matrix}
		u \\
		-v
		\end{matrix}\right]$ (resp. $\left[\begin{matrix}
		-u \\
		v
		\end{matrix}\right]$) is an eigenvector of $A$ associated with eigenvalue $-\lambda$;

\item The characteristic polynomials of $MM^{\T}$ (resp. $M^{\T}M$) is irreducible over $\mathbb{Q}$.

\end{enumerate}

	\end{lm}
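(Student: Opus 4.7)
The plan is to unfold the block structure of $A$ and then invoke irreducibility of $\phi(x)$ to handle the more delicate items (2) and (4). I would first write the eigenvector equation blockwise to extract the coupled identities $Mv=\lambda u$ and $M^{\T}u=\lambda v$. Composing them gives $MM^{\T}u=\lambda^2 u$ and $M^{\T}Mv=\lambda^2 v$, which is (1); flipping the sign of $v$ gives
$$A\left[\begin{matrix} u\\ -v\end{matrix}\right]=\left[\begin{matrix} -\lambda u\\ \lambda v\end{matrix}\right]=-\lambda\left[\begin{matrix} u\\ -v\end{matrix}\right],$$
which is (3), and the $(-u,v)^{\T}$ form is symmetric.

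For (2), I note that since $\phi(x)\in\mathbb{Q}[x]$ is irreducible and $\deg\phi\geq 2$, we have $\phi(0)\neq 0$, so $\lambda\neq 0$. Then
$$\lambda\|u\|_2^{\,2}=u^{\T}(Mv)=(M^{\T}u)^{\T}v=\lambda\|v\|_2^{\,2},$$
and dividing through by $\lambda$ yields $\|u\|_2=\|v\|_2$.

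For (4) I would first verify that the bipartition of $\Gamma$ is balanced, without which the identity linking $\phi$ and the characteristic polynomial of $MM^{\T}$ is not clean. If the colour classes have sizes $p\leq q$, the block form of $A$ gives $\det(xI_n-A)=x^{q-p}\det(x^2 I_p-MM^{\T})$, so $p<q$ would force $x\mid\phi(x)$, contradicting irreducibility. Hence $p=q=n/2$, $M$ is square, and $\phi(x)=\psi(x^2)$ where $\psi(y):=\det(yI_{n/2}-MM^{\T})$. A hypothetical non-trivial factorisation $\psi(y)=f(y)g(y)$ over $\mathbb{Q}$ would pull back to $\phi(x)=f(x^2)g(x^2)$ with both factors of positive (even) degree, contradicting irreducibility of $\phi(x)$. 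The irreducibility of the characteristic polynomial of $M^{\T}M$ follows since it coincides with that of $MM^{\T}$. The only mild obstacle is remembering to reduce to the balanced case first; once $M$ is known to be square, the substitution $y=x^{2}$ transfers irreducibility essentially for free.
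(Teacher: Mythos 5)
Your proposal is correct and follows essentially the same route as the paper: blockwise eigenvector equations for (1) and (3), the identity $u^{\T}Mv=v^{\T}M^{\T}u$ with $\lambda\neq 0$ for (2), and squareness of $M$ forced by irreducibility for (4). Your explicit substitution argument for (4) — that a factorisation $\psi(y)=f(y)g(y)$ would pull back to $\phi(x)=f(x^2)g(x^2)$ — is in fact a welcome fleshing-out of a step the paper merely asserts.
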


	\begin{proof}
Note that the characteristic polynomial $\phi(x)$ of $A$ is irreducible, it follows that zero can never be an eigenvalue of $A$, and hence $M$ must be a square matrix of order $m:=n/2$.

Let $\lambda\neq 0$ be any eigenvalue of $A$ with corresponding eigenvector
	$\left[\begin{matrix}
		u \\
		v
		\end{matrix}\right]$. Then
		\begin{equation}\label{MM1}
A\left[\begin{matrix}
		u \\
		v
		\end{matrix}\right]=\left[\begin{matrix}
		Mv \\
		M^{\T}u
		\end{matrix}\right]=\lambda\left[\begin{matrix}
		u \\
		v
		\end{matrix}\right]\Longleftrightarrow \left\{\begin{aligned}
			Mv=\lambda u,\\
			M^{\T}u=\lambda v.
\end{aligned}\right.
\end{equation}
		 Thus, we have $u\neq 0$ and $v\neq 0$, for otherwise we would have $u=v=0$, since $\lambda\neq 0$.
		It follows that $$MM^{\T}u=\lambda^2u,~M^{\T}Mv=\lambda^2v.$$

It follows from $Mv=\lambda u$ that $u^{\T}Mv=\lambda u^{\T}u$.
By $M^{\T}u=\lambda v$ we get $v^{\T}M^{\T}u=\lambda v^{\T}v$. Note $u^{\T}Mv=(u^{\T}Mv)^{\T}=v^{\T}M^{\T}u$. It follows that $\lambda u^{\T}u=\lambda v^{\T}v$, and hence
 $ u^{\T}u= v^{\T}v$ since $\lambda\neq 0$.
		
		Note that
		$$A\left[\begin{matrix}
		u \\
		-v
		\end{matrix}\right]=\left[\begin{matrix}
		-Mv \\
		M^{\T}u
		\end{matrix}\right]=-\lambda\left[\begin{matrix}
		u \\
		-v
		\end{matrix}\right].$$
		Hence, $\left[\begin{matrix}
		u \\
		-v
		\end{matrix}\right]$ is an eigenvector of $A$ associated with eigenvalue $-\lambda$. Since the characteristic polynomial of $A$ is irreducible, the set of all the eigenvalues of $A$ can be written as $\{\lambda_1,\lambda_2,\ldots,\lambda_m,-\lambda_1,-\lambda_2,\ldots,\lambda_m\}.$
		
		Hence, the set of all the eigenvalues of $MM^{\T}$ (or $M^{\T}M$) can be write as $\{\lambda_1^2,\lambda_2^2,\ldots,\lambda_m^2\}$. Since $\phi(A;x)=(x^2-\lambda_1^2)\cdots(x^2-\lambda_m^2)$ is irreducible over $\mathbb{Q}$, $\phi(MM^{\T};x)=\phi(M^{\T}M;x)=(x-\lambda_1^2)\cdots(x-\lambda_m^2)$ is also irreducible $\mathbb{Q}$.
	\end{proof}	

Now, we present the proof of Theorem~\ref{lm1}.

\begin{proof}[Proof of Theorem~\ref{lm1}]

 Set $m:=n/2$. By Lemma~\ref{LL4}, let $\lambda_1,\lambda_2,\ldots,\lambda_m,-\lambda_1,-\lambda_2,\ldots,-\lambda_m$ be the eigenvalues of $A$ and $\tilde{A}$ with corresponding normalized eigenvectors
\begin{eqnarray}\label{LP1}
\frac{1}{\sqrt{2}}\left[\begin{matrix}
		u_1\\
		v_1
		\end{matrix}\right],\ldots,\frac{1}{\sqrt{2}}\left[\begin{matrix}
		u_m\\
		v_m
		\end{matrix}\right],\frac{1}{\sqrt{2}}\left[\begin{matrix}
		u_1\\
		-v_1
		\end{matrix}\right],\ldots,\frac{1}{\sqrt{2}}\left[\begin{matrix}
		u_m\\
		-v_m
		\end{matrix}\right],\\
\frac{1}{\sqrt{2}}\left[\begin{matrix}
		\tilde{u}_1\\
		\tilde{v}_1
		\end{matrix}\right],\ldots,\frac{1}{\sqrt{2}}\left[\begin{matrix}
		\tilde{u}_m\\
		\tilde{v}_m
		\end{matrix}\right],\frac{1}{\sqrt{2}}\left[\begin{matrix}
		\tilde{u}_1\\
		-\tilde{v}_1
		\end{matrix}\right],\ldots,\frac{1}{\sqrt{2}}\left[\begin{matrix}
		\tilde{u}_m\\
		-\tilde{v}_m
		\end{matrix}\right],
\end{eqnarray}
respectively, where $u_i,\tilde{u}_i,v_i,\tilde{v}_i\in{\mathbb{R}^n}$ are $m$-dimensional unit vectors.

		By Lemma~\ref{LL1}, we have $e^{\T}(xI-A)^{-1}e=e^{\T}(xI-\tilde A)^{-1}e$. It follows from Lemma~\ref{LL2} that
		{\small \begin{equation}
		\sum_{i=1}^{m} \frac {(\frac{1}{\sqrt{2}}e_{2m}^{\T}\left[\begin{matrix}
			u_i\\
			v_i
			\end{matrix}\right])^2}{x-\lambda_i}+\sum_{i=1}^{m} \frac {(\frac{1}{\sqrt{2}}e_{2m}^{\T}\left[\begin{matrix}
			u_i\\
			-v_i
			\end{matrix}\right])^2}{x+\lambda_i}=\sum_{i=1}^{m} \frac {(\frac{1}{\sqrt{2}}e_{2m}^{\T}\left[\begin{matrix}
			\tilde u_i\\
			\tilde v_i
			\end{matrix}\right])^2}{x-\lambda_i}+\sum_{i=1}^{m} \frac {(\frac{1}{\sqrt{2}}e_{2m}^{\T}\left[\begin{matrix}
			\tilde u_i\\
			-\tilde v_i
			\end{matrix}\right])^2}{x+\lambda_i}.
		\end{equation}}

Hence, we have that for each $1\leq i\leq m$,
		\begin{eqnarray}\label{EQ1}
\left\{\begin{aligned}
			(e_{m}^{\T}u_i+e_m^{\T}v_i)^2&=&(e_{m}^{\T}\tilde u_i+e_m^{\T}\tilde v_i)^2,\\
			(e_{m}^{\T}u_i-e_m^{\T}v_i)^2&=&(e_{m}^{\T}\tilde u_i-e_m^{\T}\tilde v_i)^2.
\end{aligned}\right.
		\end{eqnarray}

For a fixed $i$, we distinguish the following two cases:\\

\noindent
\textbf{Case 1.} $e_{m}^{\T}u_i+e_m^{\T}v_i$ and $e_{m}^{\T}\tilde u_i+e_m^{\T}\tilde v_i$ have the same sign (resp. opposite sign), and $e_{m}^{\T}u_i-e_m^{\T}v_i$ and $e_{m}^{\T}\tilde u_i-e_m^{\T}\tilde v_i$
have the same sign (resp. opposite sign). It follows from~\eqref{EQ1} that
\begin{eqnarray*}
\left\{\begin{aligned}
		e_{m}^{\T}u_i+e_m^{\T}v_i&= &e_{m}^{\T}\tilde u_i+e_m^{\T}\tilde v_i,\\
		e_{m}^{\T}u_i-e_m^{\T}v_i&=&e_{m}^{\T}\tilde u_i-e_m^{\T}\tilde v_i,
\end{aligned}\right.
~{\rm or}~\left\{\begin{aligned}
		e_{m}^{\T}u_i+e_m^{\T}v_i&= &-(e_{m}^{\T}\tilde u_i+e_m^{\T}\tilde v_i),\\
		e_{m}^{\T}u_i-e_m^{\T}v_i&=&-(e_{m}^{\T}\tilde u_i-e_m^{\T}\tilde v_i),
\end{aligned}\right.
		\end{eqnarray*}
		which implies that either i) $e_{m}^{\T}u_i= e_{m}^{\T}\tilde u_i$ and $e_m^{\T}v_i= e_m^{\T}\tilde v_i$; or ii) $e_{m}^{\T}u_i=- e_{m}^{\T}\tilde u_i$ and $e_m^{\T}v_i=- e_m^{\T}\tilde v_i$.

\noindent
\textbf{Case 2.} $e_{m}^{\T}u_i+e_m^{\T}v_i$ and $e_{m}^{\T}\tilde u_i+e_m^{\T}\tilde v_i$ have the same sign (resp. opposite sign), and $e_{m}^{\T}u_i-e_m^{\T}v_i$ and $e_{m}^{\T}\tilde u_i-e_m^{\T}\tilde v_i$ have the opposite sign (resp. same sign). Then
\begin{eqnarray*}
\left\{\begin{aligned}
		e_{m}^{\T}u_i+e_m^{\T}v_i&= &e_{m}^{\T}\tilde u_i+e_m^{\T}\tilde v_i,\\
		e_{m}^{\T}u_i-e_m^{\T}v_i&=&-(e_{m}^{\T}\tilde u_i-e_m^{\T}\tilde v_i),
\end{aligned}\right.
~{\rm or}~\left\{\begin{aligned}
		e_{m}^{\T}u_i+e_m^{\T}v_i&= &-(e_{m}^{\T}\tilde u_i+e_m^{\T}\tilde v_i),\\
		e_{m}^{\T}u_i-e_m^{\T}v_i&=&e_{m}^{\T}\tilde u_i-e_m^{\T}\tilde v_i,
\end{aligned}\right.
		\end{eqnarray*}
		which implies that either i) $e_{m}^{\T}u_i=e_{m}^{\T}\tilde v_i$ and $e_m^{\T}v_i= e_m^{\T}\tilde u_i$; or ii) $e_{m}^{\T}u_i=- e_{m}^{\T}\tilde v_i$ and $e_m^{\T}v_i=- e_m^{\T}\tilde u_i$.

Thus, for a fixed $i$, we may assume that either $e_{m}^{\T}u_i= \tau_ie_{m}^{\T}\tilde u_i$ and $e_m^{\T}v_i= \tau_ie_m^{\T}\tilde v_i$  or $e_{m}^{\T}u_i=\sigma_ie_{m}^{\T}\tilde v_i$ and $e_m^{\T}v_i= \sigma_ie_m^{\T}\tilde u_i$, where $\tau_i,\sigma_i\in \{1,-1\}$.  Next, we show that uniformly, either $e_{m}^{\T}u_i= \tau_ie_{m}^{\T}\tilde u_i$ and $e_m^{\T}v_i= \tau_i e_m^{\T}\tilde v_i$ for all $1\leq i\leq m$ or $e_{m}^{\T}u_i=\sigma_ie_{m}^{\T}\tilde v_i$ and $e_m^{\T}v_i= \sigma_ie_m^{\T}\tilde u_i$ for all $1\leq i\leq m$. This is the key technical part of the proof, which highly depends on the irreducibility assumption of $\phi$.

According to Lemma~\ref{poly}, the eigenvectors of $MM^{\T}$ associated with eigenvalues $\lambda_i^2$ can be expressed as
$\xi_i=(\phi_1(\lambda_i),\phi_2(\lambda_i),\ldots,\phi_m(\lambda_i))^{\T}$, where $\phi_j(x)\in{\mathbb{Q}[x]}$ with $\deg \phi_j<n$.

By Lemma~\ref{LL4}, $u_i$ is an eigenvector of $MM^{\T}$ associated with $\lambda_i^2$. Note $u_i$ is a unit vector. It follows that $u_i$ and $\xi_i/||\xi_i||_2$ differ by at most a sign, i.e.,
there exists a $\epsilon_i\in \{1,-1\}$ such that $u_i=\epsilon_i \frac{\xi_i}{||\xi_i||_2}$, and
\begin{eqnarray*}
v_i&=&\frac{1}{\lambda_i}M^{\T}u_i\\
&=&\frac{\epsilon_i}{\lambda_i}M^{\T}(\phi_1(\lambda_i),\phi_2(\lambda_i),\ldots,\phi_m(\lambda_i))^{\T}/||\xi_i||_2\\
&=&\epsilon_i(\varphi_1(\lambda_i),\varphi_2(\lambda_i),\ldots,\varphi_m(\lambda_i))^{\T}/||\xi_i||_2,
\end{eqnarray*}
for some $\varphi_j(x)\in {\mathbb{Q}[x]}$ with degree less than $n$, for $1\leq j\leq m$. The last equality follows since the entries of the vector $\frac{1}{\lambda_i}M^{\T}(\phi_1(\lambda_i),\phi_2(\lambda_i),\ldots,\phi_m(\lambda_i))^{\T}$ belong to $\mathbb{Q}(\lambda_i)$, which is a number field.
Further note that $||u_i||_2=||v_i||_2=1$, we have $\varphi_1(\lambda_i)^2+\varphi_2(\lambda_i)^2+\cdots+\varphi_m(\lambda_i)^2=||\xi_i||^2,$ for $1\leq i\leq m$.

The above discussions apply similarly to the signed bipartite graph $\tilde{\Gamma}$ with adjacency matrix $\tilde{A}$. Then we have
$\tilde{u}_i=\tilde{\epsilon_i} \frac{\tilde{\xi}_i}{||\tilde{\xi}_i||_2}$ for $\tilde{\epsilon_i}\in\{1,-1\}$, where $\tilde{\xi}_i=(\tilde{\phi_1}(\lambda_i),\tilde{\phi}_2(\lambda_i),\ldots,\tilde{\phi}_m(\lambda_i))^{\T}$, $\tilde{\phi}_j(x)\in{\mathbb{Q}[x]}$ with $\deg \tilde{\phi}_j<n$.
Moreover, $\tilde{v}_i=\tilde{\epsilon}_i(\tilde{\varphi}_1(\lambda_i),\tilde{\varphi}_2(\lambda_i),\ldots,\tilde{\varphi}_m(\lambda_i))^{\T}/||\tilde{\xi}_i||_2$ with $\tilde{\varphi}_j(x)\in {\mathbb{Q}[x]}$ with degree less than $n$, and $\tilde{\varphi}_1(\lambda_i)^2+\tilde{\varphi}_2(\lambda_i)^2+\cdots+\tilde{\varphi}_m(\lambda_i)^2=||\tilde{\xi}_i||^2$.

\begin{claim} If $e_{m}^{\T}u_1=\tau_1e_{m}^{\T}\tilde u_1$ and $e_m^{\T}v_1=\tau_1e_m^{\T}\tilde v_1$ with $\tau_1\in\{1,-1\}$, then $e_{m}^{\T}u_i=\tau_ie_{m}^{\T}\tilde u_i$ and $e_m^{\T}v_i=\tau_i e_m^{\T}\tilde v_i$ for some $\tau_i\in\{1,-1\}$, for all $2\leq i\leq m$.
\end{claim}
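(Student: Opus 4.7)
The plan is to encode the Case~1 hypothesis at $i=1$ as the vanishing of a single polynomial in $\mathbb{Q}[x]$ at $\lambda_1$, and then use the irreducibility of $\phi(x)$ to propagate that vanishing to every $\lambda_i$. Using the rational polynomials furnished by Lemma~\ref{poly}, define
\[
P(x):=\sum_{j=1}^{m}\phi_j(x),\ V(x):=\sum_{j=1}^{m}\varphi_j(x),\ \tilde P(x):=\sum_{j=1}^{m}\tilde\phi_j(x),\ \tilde V(x):=\sum_{j=1}^{m}\tilde\varphi_j(x),
\]
so that $e_m^{\T}u_i=\epsilon_i P(\lambda_i)/\|\xi_i\|_2$ and $e_m^{\T}v_i=\epsilon_i V(\lambda_i)/\|\xi_i\|_2$, with analogous tilde identities. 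The Case~1 hypothesis at $i=1$ yields
\[
e_m^{\T}u_1\cdot e_m^{\T}\tilde v_1=\tau_1\, e_m^{\T}\tilde u_1\cdot e_m^{\T}\tilde v_1=e_m^{\T}v_1\cdot e_m^{\T}\tilde u_1,
\]
and substituting the polynomial representations makes the signs $\epsilon_1,\tilde\epsilon_1,\tau_1$ and the norm denominators cancel, leaving the sign-free identity $P(\lambda_1)\tilde V(\lambda_1)=V(\lambda_1)\tilde P(\lambda_1)$.

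Set $g(x):=P(x)\tilde V(x)-V(x)\tilde P(x)\in\mathbb{Q}[x]$. Then $g(\lambda_1)=0$, and since $\phi$ is the irreducible minimal polynomial of $\lambda_1$ over $\mathbb{Q}$, $\phi\mid g$, so $g(\lambda_i)=0$ for every $i$. Unwinding, this reads
\[
e_m^{\T}u_i\cdot e_m^{\T}\tilde v_i=e_m^{\T}v_i\cdot e_m^{\T}\tilde u_i,\qquad 1\le i\le m.
\]
Writing $a=e_m^{\T}u_i,\ b=e_m^{\T}v_i,\ c=e_m^{\T}\tilde u_i,\ d=e_m^{\T}\tilde v_i$, I would now combine this new identity $ad=bc$ with the two identities $ab=cd$ and $a^2+b^2=c^2+d^2$ already extracted from~(4). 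In the generic case $bd\neq 0$, multiplying $ab=cd$ by $ad=bc$ gives $a^2bd=bc^2d$, hence $a^2=c^2$; then $a^2+b^2=c^2+d^2$ forces $b^2=d^2$, and the sign match mandated by $ab=cd$ produces $(a,b)=\tau_i(c,d)$ for a common $\tau_i\in\{1,-1\}$, which is precisely Case~1 at~$i$. The few degenerate configurations with $b=0$ or $d=0$ collapse the three identities into trivial relations that again manifestly realize Case~1.

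The main technical hurdle is justifying that $g(x)$ truly lies in $\mathbb{Q}[x]$: this relies on fixing the Lemma~\ref{poly} representatives $\phi_j,\varphi_j,\tilde\phi_j,\tilde\varphi_j$ once and for all on both the $\Gamma$ and $\tilde\Gamma$ sides, and on verifying that the unit-vector sign ambiguities $\epsilon_i,\tilde\epsilon_i$ together with the Case-$1$ sign $\tau_1$ all drop out of the specific product used to form the identity. Once that is in place the argument is essentially automatic: the irreducibility of $\phi$ lifts the one-root identity at $\lambda_1$ to a root-for-root identity, and elementary algebra then forces Case~1 at every index.
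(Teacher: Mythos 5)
Your proof is correct, and it executes the shared key idea --- turn a sign-insensitive relation among $e_m^{\T}u_1,e_m^{\T}v_1,e_m^{\T}\tilde u_1,e_m^{\T}\tilde v_1$ into a polynomial identity over $\mathbb{Q}$ at $\lambda_1$ and let the irreducibility of $\phi$ transport it to every $\lambda_i$ --- through a genuinely different combination of relations than the paper. The paper propagates three relations: it squares \eqref{PQ1} and its $v$-analogue to get $(e_m^{\T}u_i)^2=(e_m^{\T}\tilde u_i)^2$ and $(e_m^{\T}v_i)^2=(e_m^{\T}\tilde v_i)^2$ separately, and then synchronizes the two resulting signs $\tau_i,\tilde\tau_i$ by dividing \eqref{PQ2} by \eqref{PQ1} and propagating the ratio identity \eqref{LL3}; the division forces an auxiliary argument that the numerators $\sum_j\phi_j(\lambda_1)$, etc., are nonzero. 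You instead propagate the single cross-product identity $P\tilde V=V\tilde P$ (which is manifestly a rational polynomial relation, since the signs $\epsilon_1,\tilde\epsilon_1,\tau_1$ and the irrational norms cancel in the product) and then close the argument with the two relations $ab=cd$ and $a^2+b^2=c^2+d^2$ already extracted from \eqref{EQ1}, using only multiplication. This buys you a shorter propagation step with no division and hence no non-vanishing hypothesis to verify there; the residual case analysis on $bd=0$ that you defer is genuinely harmless (the configurations $b=0,d\neq 0$ and $b\neq 0, d=0$ are ruled out by $a^2+b^2=c^2+d^2$ together with $ad=bc$ and $ab=cd$, and $b=d=0$ gives $a^2=c^2$ directly --- and in fact the paper's non-vanishing argument shows these cases never occur). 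The one point worth making explicit in a final write-up is the observation you flag yourself: the representations $e_m^{\T}u_i=\epsilon_iP(\lambda_i)/\|\xi_i\|_2$ and $e_m^{\T}v_i=\epsilon_iV(\lambda_i)/\|\xi_i\|_2$ come from Lemma~\ref{poly} together with $v_i=\frac{1}{\lambda_i}M^{\T}u_i$, with the \emph{same} sign $\epsilon_i$ and the \emph{same} denominator for $u_i$ and $v_i$; this is exactly what makes the sign and norm cancellation in the product work, and it is already established in the proof of Theorem~\ref{lm1} just before the Claim.
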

\begin{proof}
Actually, it follows from $e_{m}^{\T}u_1=\tau_1e_{m}^{\T}\tilde u_1$ that
\begin{equation}\label{PQ1}
\epsilon_1\frac{\sum_{j=1}^m \phi_j(\lambda_1)}{\sqrt{\sum_{j=1}^m \phi_j^2(\lambda_1)}}=\tau_1\tilde{\epsilon_1}\frac{\sum_{j=1}^m \tilde{\phi}_j(\lambda_1)}{\sqrt{\sum_{j=1}^m \tilde{\phi}_j^2(\lambda_1)}}.
\end{equation}
Taking squares on both sides of \eqref{PQ1}, it follows that $$\Phi(\lambda_1):=(\sum_{j=1}^m \phi_j(\lambda_1))^2\sum_{j=1}^m \tilde{\phi}_j^2(\lambda_1)-(\sum_{j=1}^m \tilde{\phi}_j(\lambda_1))^2\sum_{j=1}^m \phi_j^2(\lambda_1)=0.$$
Note that $\phi(x)$ is irreducible and $\Phi(x)\in{\mathbb{Q}[x]}$. It follows that $\phi(x)\mid \Phi(x)$. Hence $\Phi(\lambda_i)=0$ and $e_{m}^{\T}u_i=\tau_i e_{m}^{\T}\tilde u_i$ for some $\tau_i\in\{1,-1\}$, for $2\leq i\leq m$. Similarly, we have $e_m^{\T}v_i=\tilde{\tau}_ie_m^{\T}\tilde v_i$ for some $\tilde{\tau}_i\in\{1,-1\}$, for $2\leq i\leq m$.
Next, we show that $\tau_i$ and $\tilde{\tau}_i$ coincide, i.e., $\tau_i=\tilde{\tau}_i=\pm 1$, for all $2\leq i\leq m$.

In fact, it follows from $e_m^{\T}v_1=\tau_1e_m^{\T}\tilde v_1$ that
\begin{equation}\label{PQ2}
\epsilon_1\frac{\sum_{j=1}^m \varphi_j(\lambda_1)}{\sqrt{\sum_{j=1}^m \phi_j^2(\lambda_1)}}=\tau_1\tilde{\epsilon_1}\frac{\sum_{j=1}^m \tilde{\varphi}_j(\lambda_1)}{\sqrt{\sum_{j=1}^m \tilde{\phi}_j^2(\lambda_1)}}.
\end{equation}

It is easy to see that all the numerators in Eqs.~\eqref{PQ1} and \eqref{PQ2} are non-zero. For example, if $\sum_{j=1}^m \phi_j(\lambda_1)=0$, then
$\sum_{j=1}^m \phi_j(\lambda_i)=0$ for $1\leq i\leq m$ by the irreducibility of $\phi$. That is, $e_m^{\T}\xi_i=0$ for $1\leq i\leq m$, which is ridiculous since
$\xi_i$ ($1\leq i\leq m$) are eigenvectors of $MM^{\T}$ constituting a basis of $\mathbb{R}^m$.

Eq.~\eqref{PQ2} divides Eq.~\eqref{PQ1}, it follows that
	\begin{equation}\label{LL3}
\frac{\sum_{j=1}^m \phi_j(\lambda_1)}{\sum_{j=1}^m \varphi_j(\lambda_1)}=\frac{\sum_{j=1}^m \tilde{\phi}_j(\lambda_1)}{\sum_{j=1}^m \tilde{\varphi}_j(\lambda_1)},
\end{equation}
or equivalently, $\Psi(\lambda_1):=\sum_{j=1}^m \phi_j(\lambda_1)\sum_{j=1}^m \tilde{\varphi}_j(\lambda_1)-\sum_{j=1}^m \varphi_j(\lambda_1)\sum_{j=1}^m \tilde{\phi}_j(\lambda_1)=0$.
By the irreducibility of $\phi(x)$, we obtain that $\phi(x)\mid \Psi(x)$, and hence $\Psi(\lambda_i)=0$ for $2\leq i\leq m$. So Eq.~\eqref{LL3} still holds if we replace $\lambda_1$ with any $\lambda_i$, i.e.,
\begin{equation}\label{LLL}
\frac{\sum_{j=1}^m \phi_j(\lambda_i)}{\sum_{j=1}^m \varphi_j(\lambda_i)}=\frac{\sum_{j=1}^m \tilde{\phi}_j(\lambda_i)}{\sum_{j=1}^m \tilde{\varphi}_j(\lambda_i)},~{\rm for}~2\leq i\leq m.
\end{equation}

By the previous discussions, we get that
\begin{equation}\label{OO1}
\epsilon_i\frac{\sum_{j=1}^m \phi_j(\lambda_i)}{\sqrt{\sum_{j=1}^m \phi_j^2(\lambda_i)}}=\tau_i\tilde{\epsilon_i}\frac{\sum_{j=1}^m \tilde{\phi}_j(\lambda_i)}{\sqrt{\sum_{j=1}^m \tilde{\phi}_j^2(\lambda_i)}},~{\rm for}~{\rm for}~ 2\leq i\leq m.
\end{equation}
\begin{equation}\label{OO2}
\epsilon_i\frac{\sum_{j=1}^m \varphi_j(\lambda_i)}{\sqrt{\sum_{j=1}^m \phi_j^2(\lambda_i)}}=\tilde{\tau}_i\tilde{\epsilon_i}\frac{\sum_{j=1}^m \tilde{\varphi}_j(\lambda_i)}{\sqrt{\sum_{j=1}^m \tilde{\phi}_j^2(\lambda_i)}},~ {\rm for}~2\leq i\leq m.
\end{equation}
Eq.~\eqref{OO2} divides Eq.~\eqref{OO1}, we obtain $\frac{\sum_{j=1}^m \phi_j(\lambda_i)}{\sum_{j=1}^m \varphi_j(\lambda_i)}=\frac{\tau_i}{\tilde{\tau}_i}\frac{\sum_{j=1}^m \tilde{\phi}_j(\lambda_i)}{\sum_{j=1}^m \tilde{\varphi}_j(\lambda_i)},$ together with Eq.~\eqref{LLL}, we get the conclusion that $\tau_i=\tilde{\tau}_i=\pm 1$ for $2\leq i\leq m$.
\end{proof}

\begin{claim}: If $e_{m}^{\T}u_1=\sigma_1e_{m}^{\T}\tilde v_1$ and $e_m^{\T}v_1=\sigma_1e_m^{\T}\tilde u_1$ with $\sigma_1\in\{1,-1\}$, then $e_{m}^{\T}u_i=\sigma_ie_{m}^{\T}\tilde v_i$ and $e_m^{\T}v_i=\sigma_ie_m^{\T}\tilde u_i$ for some $\sigma_i\in\{1,-1\}$, for all $2\leq i\leq m$.
\end{claim}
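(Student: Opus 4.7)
My plan is to mirror the proof of Claim 1 verbatim, with the single change that, because $\tilde{u}_i$ and $\tilde{v}_i$ now play swapped roles, the polynomial $\tilde{\phi}_j$ in each numerator originating from the $\tilde{u}$-side gets replaced by $\tilde{\varphi}_j$. Recall that $e_m^{\T}\tilde{u}_i=\tilde{\epsilon}_i\sum_{j=1}^m\tilde{\phi}_j(\lambda_i)/\|\tilde{\xi}_i\|_2$ while $e_m^{\T}\tilde{v}_i=\tilde{\epsilon}_i\sum_{j=1}^m\tilde{\varphi}_j(\lambda_i)/\|\tilde{\xi}_i\|_2$, so this substitution is purely mechanical and the arithmetic skeleton of Claim 1 carries over unchanged.

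Starting from $e_m^{\T}u_1=\sigma_1 e_m^{\T}\tilde{v}_1$, I square both sides and clear the norm denominators to produce the polynomial
$$
\Phi(x)=\Bigl(\sum_{j=1}^m\phi_j(x)\Bigr)^2\sum_{j=1}^m\tilde{\phi}_j^2(x)-\Bigl(\sum_{j=1}^m\tilde{\varphi}_j(x)\Bigr)^2\sum_{j=1}^m\phi_j^2(x)\in\mathbb{Q}[x],
$$
which vanishes at $\lambda_1$. Irreducibility of $\phi$ then forces $\phi\mid\Phi$, so $\Phi(\lambda_i)=0$ for every $2\leq i\leq m$, and hence $e_m^{\T}u_i=\sigma_i\,e_m^{\T}\tilde{v}_i$ for some $\sigma_i\in\{1,-1\}$. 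Applying the same recipe to $e_m^{\T}v_1=\sigma_1 e_m^{\T}\tilde{u}_1$ yields $e_m^{\T}v_i=\tilde{\sigma}_i\,e_m^{\T}\tilde{u}_i$ for some $\tilde{\sigma}_i\in\{1,-1\}$.

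The final step is to force $\sigma_i=\tilde{\sigma}_i$. I divide the unsquared $u_1$-identity by the unsquared $v_1$-identity; this cancels $\sigma_1$, the sign factor $\tilde{\epsilon}_1/\epsilon_1$, and the norm ratio, leaving
$$
\Psi(x)=\sum_{j=1}^m\phi_j(x)\sum_{j=1}^m\tilde{\phi}_j(x)-\sum_{j=1}^m\varphi_j(x)\sum_{j=1}^m\tilde{\varphi}_j(x)\in\mathbb{Q}[x]
$$
with $\Psi(\lambda_1)=0$, and irreducibility again gives $\Psi(\lambda_i)=0$ for all $i$. Performing the same division at $\lambda_i$ (now with $\sigma_i$ and $\tilde{\sigma}_i$ in place of $\sigma_1$) forces $\sigma_i/\tilde{\sigma}_i$ to equal the ratio dictated by $\Psi(\lambda_i)=0$, which is $1$. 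Non-vanishing of the various denominators is handled exactly as in Claim 1: a vanishing sum such as $\sum_j\phi_j(\lambda_1)=0$ would, via irreducibility, force $e_m$ to be orthogonal to a full eigenbasis of $MM^{\T}$, which is impossible. Since the whole argument is a direct mirror of Claim 1, I do not expect any genuinely new mathematical obstacle; the main thing to get right is the bookkeeping that the swap $\tilde{u}\leftrightarrow\tilde{v}$ translates into $\tilde{\phi}\leftrightarrow\tilde{\varphi}$ consistently in every identity.
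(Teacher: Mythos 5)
Your proposal is correct and takes exactly the route the paper intends: the paper's own proof of this claim is the one-line remark that it ``follows by using the same argument as Claim 1,'' and your write-up is precisely that argument with the swap $\tilde{\phi}_j \leftrightarrow \tilde{\varphi}_j$ carried through consistently, yielding the correct modified $\Phi$ and $\Psi$ and the same sign-matching and non-vanishing arguments. In effect you have supplied the details the paper omits, and they check out.
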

\begin{proof}This follows by using the same argument as Claim 1; we omit the details here.
\end{proof}

 Write $$U=[u_1,u_2,\ldots,u_m],~V=[v_1,v_2,\ldots,v_m],$$
$$\tilde{U}=[\tilde u_1,\tilde u_2,\ldots,\tilde u_m],~ \tilde{V}=[\tilde v_1, \tilde v_2, \ldots, \tilde v_m].$$

If the condition of Claim 1 holds, we may replace $u_i$ and $v_i$ with $-u_i$ and $-v_i$ respectively, whenever $\tau_i=-1$ for $1\leq i\leq m$. Then we have $e_{m}^{\T}u_i=e_{m}^{\T}\tilde u_i$ and $e_m^{\T}v_i=e_m^{\T}\tilde v_i$ for $1\leq i\leq m$.
    Let $R=\frac{1}{\sqrt{2}}\left[
		\begin{matrix}
		U & U\\
		V & -V
		\end{matrix}
		\right]$ and $\tilde R=\frac{1}{\sqrt{2}}\left[
		\begin{matrix}
		\tilde U & \tilde U\\
		\tilde V & -\tilde V
		\end{matrix}
	\right].$
Define \begin{equation}\label{Q1}
Q:=R\tilde{R}^{\T}=\left[
		\begin{matrix}
		U\tilde U^{\T} & O\\
		O&V\tilde V^{\T}
		\end{matrix}
		\right].
\end{equation}
Then $Q$ is an orthogonal matrix and $$R^{\T}AR=\tilde R^{\T}\tilde A\tilde R={\rm diag}(\lambda_1,\ldots,\lambda_m,-\lambda_1,\ldots,-\lambda_m).$$
	Thus, $Q^{\T}AQ=\tilde{A}$.
 Next, it remains to show that $Q$ is regular, i.e., $Qe_{2m}=e_{2m}$, which is equivalent to $\tilde U^{\T}e_m=U^{\T}e_m$ and $\tilde V^{\T}e_m=V^{\T}e_m$. That is, $e_m^{\T}u_i=e_m^{\T}\tilde u_i,~e_m^{\T}v_i=e_m^{\T}\tilde v_i,~{\rm for}~1\leq i\leq m,$ which are precisely that we have obtained before, as desired.

If the condition of Claim 2 holds, similarly, we may replace $u_i$ and $v_i$ with $-u_i$ and $-v_i$ respectively, whenever $\sigma_i=-1$. Then $e_{m}^{\T}u_i=e_{m}^{\T}\tilde v_i$ and $e_m^{\T}v_i=e_m^{\T}\tilde u_i$, for $1\leq i\leq m$. Now let $R=\frac{1}{\sqrt{2}}\left[
		\begin{matrix}
		U & U\\
		V & -V
		\end{matrix}
		\right]$ and $\tilde R=\frac{1}{\sqrt{2}}\left[
		\begin{matrix}
		\tilde U & -\tilde U\\
		\tilde V & \tilde V
		\end{matrix}
		\right].$
Define \begin{equation}\label{Q1}
Q:=R\tilde{R}^{\T}=\left[
		\begin{matrix}
		O & U\tilde V^{\T}\\
		V\tilde U^{\T}&O
		\end{matrix}
		\right].
\end{equation}
Then $Q$ is an orthogonal matrix and still $Q^{\T}AQ=\tilde{A}$ holds. Moreover, it is easy to verify that $Qe_{2m}=e_{2m}$. So $Q$ is regular.

The proof is complete.

\end{proof}

\section{Proof of Theorem~\ref{main}}
	In this section, we present the proof of Theorem~\ref{main}.

	Recall that for a monic polynomial $f(x)\in{\mathbb{Z}[x]}$ with degree $n$, the \emph{discriminant} of $f(x)$ is defined as:
	$$\Delta(f)=\prod_{1\leq i<j\leq n}(\alpha_i-\alpha_j)^2,$$
where $\alpha_1,\alpha_2,\ldots,\alpha_n$ are all the roots of $f(x)$.

	Then it is clear that $\Delta(f)$ is always an integer for $f(x)\in{\mathbb{Z}[x]}$, and $\Delta(f)=0$ if and only if $f$ has a multiple root. Define the \emph{discriminant} of a matrix $A$, denoted by $\Delta(A)$, as the discriminant of its characteristic polynomial, i.e., $\Delta(A):=\Delta(\det(xI-A))$. The \emph{discriminant} of a graph $G$, denoted by $\Delta(G)$, is defined to be the discriminant of its adjacency matrix.

 In~\cite{[22]}, Wang and Yu give the following theorem, which is our main tool in proving Theorem~\ref{main}.
 	
	\begin{them}[\cite{[22]}]\label{lm2}
		Let $A$ be a symmetric integral matrix. Suppose there exists a rational orthogonal matrix $Q$ such that
$Q^{\rm T}AQ$ is an integral matrix. If $\Delta(A)$ is odd and square-free, then $Q$ must be a signed permutation matrix.
	\end{them}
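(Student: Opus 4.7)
The goal is to prove that $Q$ preserves the standard lattice $\mathbb{Z}^n$, since a real orthogonal matrix satisfying $Q\mathbb{Z}^n = \mathbb{Z}^n$ is automatically a signed permutation matrix. First I would set $L := \mathbb{Z}^n$ and $L' := Q\mathbb{Z}^n$; both are full $\mathbb{Z}$-lattices in $\mathbb{Q}^n$. Since $Q^{\T}Q = I_n$, each of $L$ and $L'$ is self-dual with respect to the standard inner product (their Gram matrices are $I_n$). Integrality of $A$ gives $AL \subseteq L$, and the identity $AQ = QB$, where $B := Q^{\T}AQ$ is integral by hypothesis, gives $AL' = Q(B\mathbb{Z}^n) \subseteq Q\mathbb{Z}^n = L'$. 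Hence both $L$ and $L'$ are self-dual, $A$-invariant $\mathbb{Z}$-lattices, and the task reduces to showing $L = L'$.

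Next, form the enveloping lattice $\hat L := L + L'$ and the intersection $L_0 := L \cap L'$. Both are $A$-invariant, and self-duality forces $(\hat L)^* = L_0$ with respect to the standard inner product. The finite quotient $D := \hat L/L_0$ has order $[L : L_0]^2$ and carries a non-degenerate $\mathbb{Q}/\mathbb{Z}$-valued symmetric ``discriminant form'' $b$ inherited from the inner product, together with a compatible $\mathbb{Z}[A]$-action. The subgroups $L/L_0$ and $L'/L_0$ appear as Lagrangian (maximal isotropic) $A$-submodules of $(D,b)$. Proving $L = L'$ is equivalent to proving $D = 0$, so I would argue by contradiction: assume $D \neq 0$, pick a prime $p$ dividing $|D|$, and study the $p$-primary part $D_p$ as a nonzero $\mathbb{Z}_p[A]$-module equipped with a non-degenerate form and a pair of Lagrangians.

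The argument then splits by prime. For $p = 2$, the hypothesis that $\Delta(A)$ is \emph{odd} means $\phi(x) := \det(xI - A)$ is separable modulo $2$, so $\mathbb{Z}_2[A]$ is a product of unramified extensions of $\mathbb{Z}_2$; a standard structure-theorem argument then shows that no nonzero self-dual finite $\mathbb{Z}_2[A]$-module with a Lagrangian can exist, so $D_2 = 0$. For an odd prime $p$ dividing $\Delta(A)$, square-freeness gives $v_p(\Delta(A)) = 1$, so the ramification of $\mathbb{Z}_p[A]$ at $p$ is as mild as possible; a careful length count on $D_p$ using the discriminant form and the two Lagrangians would force $v_p(\Delta(A)) \geq 2$, again a contradiction. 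I expect this second case to be the main technical obstacle: the argument must track the discriminant form, the $\mathbb{Z}_p[A]$-module structure, and the existence of two distinct $A$-invariant Lagrangians with enough precision to extract the sharp divisibility bound from $v_p(\Delta(A)) = 1$. Once $D = 0$ has been established, $L = L'$ follows, hence $Q$ belongs to $\mathrm{O}_n(\mathbb{Z})$, which is precisely the group of signed permutation matrices.
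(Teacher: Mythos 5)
First, a point of comparison: the paper does not prove this statement at all. It is quoted verbatim from reference [22] (Wang and Yu) and used as a black box, so there is no internal proof to measure your argument against. That said, your lattice-theoretic setup is sound and is a reasonable repackaging of the framework underlying [22]: $L=\mathbb{Z}^n$ and $L'=Q\mathbb{Z}^n$ are indeed self-dual $A$-invariant lattices, $(L+L')^{*}=L\cap L'$, the glue group $D=\hat L/L_0$ carries a non-degenerate discriminant form on which $A$ acts self-adjointly with $L/L_0$ and $L'/L_0$ Lagrangian, and reducing the theorem to $D=0$ (equivalently, to showing the level of $Q$ is $1$) is correct.

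There are, however, two genuine gaps. For $p\nmid\Delta(A)$ (in particular $p=2$), the reason you give --- that no nonzero finite module over an \'etale $\mathbb{Z}_p$-algebra can carry a non-degenerate form with a Lagrangian --- is false as an abstract statement: a hyperbolic plane over the residue field, with $A$ acting as a scalar, is exactly such a module. What actually kills $D_p$ is that separability of $\phi$ modulo $p$ makes $L/pL\cong\mathbb{F}_p^n$ a multiplicity-free $\mathbb{F}_p[A]$-module whose isotypic components are mutually orthogonal and individually non-degenerate, so no nonzero $A$-invariant totally isotropic subspace exists; this argument lives in the ambient space $L/pL$, not in $D$ alone. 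More seriously, the odd-$p$ case with $v_p(\Delta(A))=1$ is the entire content of the theorem, and you have not supplied it. A ``length count on $D_p$'' cannot work in the form you describe, because the abstract data (a hyperbolic $\mathbb{F}_p$-plane with scalar $A$-action and two Lagrangians) is perfectly consistent with $v_p(\Delta(A))=1$; nothing in the discriminant form alone distinguishes $v_p=1$ from $v_p\geq 2$. The known argument instead works in $\mathbb{F}_p^n$: an $A$-invariant totally isotropic subspace $V$ forces $\chi_V^2\mid\phi\bmod p$, square-freeness then pins $V$ down to a line spanned by an eigenvector $v$ with $v^{\T}v\equiv 0\pmod p$, and a further, genuinely delicate computation (essentially modulo $p^2$, exploiting the symmetry of $A$) shows this already implies $p^2\mid\Delta(A)$. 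Flagging this step as ``the main technical obstacle'' without an argument leaves the proof incomplete exactly where the theorem is hard.
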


	However, Theorem~\ref{lm2} cannot be used directly, since the $\Delta(\Gamma)$ is always a perfect square for a signed bipartite graph $\Gamma$ with an equal size of bipartition, as shown by the following lemma.

\begin{lm}Let $\Gamma$ be a signed bipartite graph with bipartite-adjacency matrix $M$, where $M$ is a square matrix of order $m:=n/2$. Then $\Delta(\Gamma)=2^n\det^2(M)\Delta^2(M^{\rm T}M)$.
\end{lm}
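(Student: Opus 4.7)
The plan is to compute $\Delta(\Gamma)=\Delta(A)$ directly from the spectral structure of a signed bipartite graph and then simplify, using the bijection between eigenvalues of $A$ and those of $M^{\T}M$ supplied by Lemma~\ref{LL4}. Specifically, if $\lambda_1^2,\ldots,\lambda_m^2$ are the eigenvalues of $M^{\T}M$ (equivalently of $MM^{\T}$), then Lemma~\ref{LL4} tells us that the spectrum of $A$ is $\{\lambda_1,\ldots,\lambda_m,-\lambda_1,\ldots,-\lambda_m\}$. (In the degenerate case where $M$ is singular one checks that both sides of the claimed identity vanish, since then $A$ has $0$ as an eigenvalue of multiplicity at least $2$.)

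Next I would unfold the definition $\Delta(A)=\prod_{i<j}(\mu_i-\mu_j)^2$, indexing the eigenvalues as $\mu_i=\lambda_i$ and $\mu_{m+i}=-\lambda_i$ for $1\le i\le m$, and split the product into four types of pairs:
\begin{enumerate}
\item $1\le i<j\le m$, contributing $(\lambda_i-\lambda_j)^2$;
\item $m< i<j\le 2m$, contributing $(\lambda_j-\lambda_i)^2$ after re-indexing (same set of factors as type 1);
\item pairs of the form $(i,m+i)$, contributing $(2\lambda_i)^2$, so $4^m\prod_i\lambda_i^2=2^n\det(M)^2$ in total;
\item pairs $(i,m+k)$ with $i\ne k$, contributing $(\lambda_i+\lambda_k)^2$ for each of the $m(m-1)$ ordered pairs, i.e., $\prod_{a<b}(\lambda_a+\lambda_b)^4$ overall.
\end{enumerate}
Combining types 1 and 2 gives $\prod_{a<b}(\lambda_a-\lambda_b)^4$, and then grouping with type 4 via the factorization $(\lambda_a-\lambda_b)(\lambda_a+\lambda_b)=\lambda_a^2-\lambda_b^2$ yields
\[
\Delta(A)=2^n\det(M)^2\prod_{1\le a<b\le m}(\lambda_a^2-\lambda_b^2)^4=2^n\det(M)^2\,\Delta^2(M^{\T}M),
\]
where in the last step I use $\Delta(M^{\T}M)=\prod_{a<b}(\lambda_a^2-\lambda_b^2)^2$ and $\prod_i\lambda_i^2=\det(M^{\T}M)=\det(M)^2$.

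There is no real obstacle in this proof: it is a careful but straightforward accounting of pairs. The only thing to be attentive to is that each unordered pair $\{a,b\}$ with $a\ne b$ contributes to the discriminant in all four ways indicated above, so that the exponents $4$ on the $(\lambda_a\pm\lambda_b)$ factors come out correctly, and that the ``diagonal'' contribution $\prod_i(2\lambda_i)^2$ is precisely what produces the factor $2^n\det(M)^2$.
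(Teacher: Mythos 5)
Your proof is correct and follows essentially the same route as the paper: both compute $\Delta(A)$ directly from the symmetric spectrum $\{\pm\lambda_1,\ldots,\pm\lambda_m\}$, split the pairs into same-sign and mixed-sign contributions, extract the diagonal factor $\prod_i(2\lambda_i)^2=2^n\det(M)^2$, and recombine the rest via $(\lambda_a-\lambda_b)(\lambda_a+\lambda_b)=\lambda_a^2-\lambda_b^2$. Your explicit four-way case split and the remark on the singular case are slightly more careful than the paper's one-line product, but the argument is the same.
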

\begin{proof}
Let the eigenvalues of $\Gamma$ be $\pm \lambda_1,\pm\lambda_2,\ldots,\pm\lambda_m.$
 Then the eigenvalues of $M^{\T}M$ are $\lambda_1^2,\lambda_2^2,\ldots,\lambda_m^2.$  So we have
		\begin{eqnarray*}
			\Delta(\Gamma)&=&\prod_{1\leq i<j\leq m}(\lambda_i-\lambda_j)^2\prod_{1\leq i,j\leq m}(\lambda_i+\lambda_j)^2\prod_{1\leq i<j\leq m}(-\lambda_i+\lambda_j)^2\\
			&=&2^n\lambda_1^2\lambda_2^2\cdots\lambda_m^2\prod_{1\leq i<j\leq m}(\lambda_i^2-\lambda_j^2)^4\\
			&=&2^n\det(M^{\T}M)\Delta^2(M^{\T}M)\\
			&=&2^n (\det(M))^2\Delta^2(M^{\rm T}M).
		\end{eqnarray*}
This completes the proof.
\end{proof}

Let $a_0$ be the constant term of the characteristic polynomial of $G$ defined as above. Then $$a_0=(-1)^m\det(M^{\T}M)=(-1)^m\det{^2}(M).$$ Note that for a tree with an irreducible characteristic polynomial $\phi(x)$, the constant term of $\phi(x)$ is always $\pm1$. Thus we have
\begin{coro}Let $T$ be a tree with an irreducible characteristic polynomial. Then $\Delta(T)=2^n\Delta^2(M^{\rm T}M)$.

\end{coro}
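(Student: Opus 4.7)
My plan is to read off the corollary directly from the lemma just proved, together with the two facts recorded in the paragraph between them. Since $T$ is a tree it is bipartite, and since $\phi(x)$ is irreducible of degree $n\ge 2$, zero cannot be a root of $\phi$; exactly as at the start of the proof of Lemma~\ref{LL4}, this forces the two color classes to have equal size $m=n/2$, so the bipartite-adjacency matrix $M$ is a square matrix of order $m$. The preceding lemma then applies and yields
\[
\Delta(T)=2^n(\det M)^2\,\Delta^2(M^{\T}M).
\]

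The remaining task is to show $(\det M)^2=1$. The identity $a_0=(-1)^m(\det M)^2$ displayed just above the corollary rewrites $(\det M)^2$ in terms of the constant term $a_0$ of $\phi(x)$, so it suffices to know $|a_0|=1$. This is the content of the remark that for a tree with irreducible characteristic polynomial the constant term of $\phi$ is $\pm 1$: a tree has at most one perfect matching, so $|\det A(T)|\in\{0,1\}$; irreducibility of $\phi$ excludes $0$ as an eigenvalue, hence $|\det A(T)|=|a_0|=1$. Therefore $(\det M)^2=1$, and substituting into the displayed formula gives $\Delta(T)=2^n\Delta^2(M^{\T}M)$, as desired.

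The corollary is essentially an immediate specialization of the preceding lemma combined with the classical determinant computation for trees, so there is no real obstacle; the only step worth stating explicitly is the implication from irreducibility of $\phi$ to the nonvanishing of $\det A(T)$, which in turn forces $\det A(T)=\pm 1$ via the unique perfect matching.
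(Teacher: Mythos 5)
Your proof is correct and follows essentially the same route as the paper: apply the preceding lemma and then eliminate the factor $(\det M)^2$ via the identity $a_0=(-1)^m\det{}^2(M)$ together with the fact that the constant term of $\phi$ is $\pm 1$ for a tree with irreducible characteristic polynomial. The only difference is that you supply the (correct) justification of that last fact via the at-most-one perfect matching of a tree, which the paper merely asserts.
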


Finally, we are ready to present the proof of Theorem~\ref{main}.
	
	\begin{proof}[{Proof of Theorem~\ref{main}}.]
		Let $\tilde{\Gamma}$ be any signed graph that is generalized cospectral with $\Gamma=(T,\sigma)$. We shall show that $\tilde{\Gamma}$ is isomorphic to $\Gamma$.
 Note that $\tilde{\Gamma}$ has the same number of edges as $\Gamma$ and moreover, the assumption that $\phi(\tilde{\Gamma})=\phi(\Gamma)$ is irreducible forces $\tilde{\Gamma}$ to be connected. Thus, $\tilde{\Gamma}$ is signed graph whose underlying graph is a tree (say $\tilde{T}$), and $\tilde{\Gamma}=(\tilde{T},\tilde{\sigma})$.

 Note that both $T$ and $\tilde{T}$ are balanced as signed graphs, we have
$\phi(T)=\phi(\Gamma)$ and $\phi(\tilde{T})=\phi(\tilde{\Gamma})$.
Let $A(\Gamma)=D_1A(T)D_1$ and $A(\tilde{\Gamma})=D_2A(\tilde{T})D_2$, where $D_1$ and $D_2$ are diagonal matrices
whose diagonal entries are $\pm 1$.

By Theorem~\ref{rational}, the fact that $\Gamma$ and $\tilde{\Gamma}$ are generalized cospectral implies that there exists a regular rational orthogonal matrix $Q$ such that
\begin{equation}\label{QQ}
Q^{\rm T} A(\Gamma)Q=A(\tilde{\Gamma}),
\end{equation}
 i.e., $Q^{\rm T}(D_1A(T)D_1)Q=D_2A(\tilde{T})D_2$, which is equivalent to
$\hat{Q}^{\rm T}A(T)\hat{Q}=A(\tilde{T}),$ where $\hat{Q}=D_1QD_2$ is a rational orthogonal matrix.

	 Let  $$A(T)=\left[
		\begin{matrix}
		O & M\\
		M^{\rm T} & O
		\end{matrix}
		\right], A(\tilde{T})=\left[
		\begin{matrix}
		O &  \tilde{M}\\
		\tilde{M}^{\rm T} & O
		\end{matrix}
		\right].$$
By Theorem~\ref{lm1}, assume without loss of generality that $Q=\left[
		\begin{matrix}
		Q_1 & O\\
		O & Q_2
		\end{matrix}
		\right]$ and  $\hat{Q}=\left[
		\begin{matrix}
		\hat{Q}_1 & O\\
		O & \hat{Q}_2
		\end{matrix}
		\right].$
Then we have $\hat{Q}_1^{\rm T}M\hat{Q}_2=\tilde{M}$. It follows that
 $$\hat{Q}_1^{\rm T}MM^{\rm T}\hat{Q}_1=\tilde{M}\tilde{M}^{\rm T}~{\rm  and}~ \hat{Q}_2^{\rm T}M^{\rm T}M\hat{Q}_2=\tilde{M}^{\rm T}\tilde{M}.$$
 Note that $\Delta(M^{\rm T}M)=\Delta(MM^{\rm T})=2^{-n/2}\sqrt{\Delta(T)}$, which is odd and square-free. Thus, according to Theorem~\ref{lm2}, both $\hat{Q}_1$ and $\hat{Q}_2$ are signed permutation matrices. It follows that $Q=D_1\hat{Q}Q_2$ is a signed permutation matrix. Moreover, note that $Q$ is regular. Therefore, $Q$ is a permutation matrix, and by Eq.~\eqref{QQ}, we conclude that $\tilde{\Gamma}$ is isomorphic to $\Gamma$. The proof is complete.

	\end{proof}

	\begin{rem}\textup{The condition of Theorem~\ref{main} is tight in the sense that Theorem~\ref{main} is no longer true if $2^{-\frac n2}\sqrt{\Delta(T)}$ has a multiple
odd prime factor. Let the signed bipartite-adjacency matrices of two signed trees $T$ and $\tilde{T}$ be given as follows, respectively:
		$${\tiny M=\left(
\begin{array}{ccccccccc}
 -1 & 0 & 0 & 0 & 0 & 0 & 0 & 0 & 0 \\
 -1 & -1 & 0 & 0 & 0 & 0 & 0 & 0 & 0 \\
 1 & 0 & -1 & 0 & 0 & 0 & 0 & 0 & 0 \\
 0 & 0 & 1 & -1 & 0 & 0 & 0 & 0 & 0 \\
 0 & -1 & 0 & 0 & -1 & 1 & 0 & 0 & 0 \\
 0 & 0 & 0 & 0 & -1 & 0 & 0 & 0 & 0 \\
 0 & 0 & 0 & 0 & -1 & 0 & -1 & 1 & 0 \\
 0 & 0 & 0 & 0 & 0 & 0 & -1 & 0 & 0 \\
 0 & 0 & 0 & 0 & 0 & -1 & 0 & 0 & -1 \\
\end{array}
\right),\tilde{M}=\left(
\begin{array}{ccccccccc}
 0 & 0 & 0 & 0 & 0 & 0 & 0 & 1 & -1 \\
 0 & 0 & 0 & -1 & -1 & 0 & 1 & 0 & 0 \\
 0 & 0 & 0 & -1 & 0 & 0 & 0 & 0 & 0 \\
 -1 & -1 & 0 & 0 & 0 & 0 & 0 & 0 & 0 \\
 0 & -1 & -1 & 0 & 0 & 0 & 0 & 0 & 0 \\
 -1 & 0 & 0 & 0 & 0 & 0 & 1 & 0 & 0 \\
 0 & 0 & 0 & 0 & 0 & 0 & -1 & 0 & 0 \\
 0 & 0 & 0 & 0 & 0 & 0 & 0 & -1 & 0 \\
 -1 & 0 & 0 & 0 & 0 & -1 & 0 & 0 & 1 \\
\end{array}
\right).	}$$
		Then $$\phi(T)=\phi(\tilde{T})=-1 + 22 x^2 - 162 x^4 + 538 x^6 - 897 x^8 + 809 x^{10} - 410 x^{12} +
 116 x^{14} - 17 x^{16} + x^{18},$$ which is irreducible over $\mathbb{Q}$. However, $2^{-9}\sqrt{\Delta(T)}=7^2\times347\times357175051$, i.e., $2^{-9}\sqrt{\Delta(T)}$ has a multiple factor 7 and the condition of Theorem~\ref{main} is not satisfied. Actually, there indeed exists a regular rational orthogonal matrix $Q\in \mathcal{Q}(G)$ such that $\tilde A=Q^{\T}AQ$, where
		$Q={\rm diag}(Q_1,Q_2)$ and $Q_1$ and $Q_2$ are given as follows respectively.
$${\tiny Q_1=\frac{1}{7}\left(
\begin{array}{ccccccccc}
 -1 & -1 & -2 & -2 & 4 & 3 & 3 & 2 & 1 \\
 -2 & -2 & 3 & 3 & 1 & -1 & -1 & 4 & 2 \\
 2 & 2 & 4 & -3 & -1 & 1 & 1 & 3 & -2 \\
 4 & -3 & 1 & 1 & -2 & 2 & 2 & -1 & 3 \\
 -3 & 4 & 1 & 1 & -2 & 2 & 2 & -1 & 3 \\
 3 & 3 & -1 & -1 & 2 & -2 & -2 & 1 & 4 \\
 1 & 1 & 2 & 2 & 3 & 4 & -3 & -2 & -1 \\
 2 & 2 & -3 & 4 & -1 & 1 & 1 & 3 & -2 \\
 1 & 1 & 2 & 2 & 3 & -3 & 4 & -2 & -1 \\
\end{array}\right),
Q_2=\frac{1}7\left(
\begin{array}{ccccccccc}
 2 & 2 & 4 & -3 & -1 & 1 & 1 & 3 & -2 \\
 2 & 2 & -3 & 4 & -1 & 1 & 1 & 3 & -2 \\
 -2 & -2 & 3 & 3 & 1 & -1 & -1 & 4 & 2 \\
 4 & -3 & 1 & 1 & -2 & 2 & 2 & -1 & 3 \\
 1 & 1 & 2 & 2 & 3 & 4 & -3 & -2 & -1 \\
 -3 & 4 & 1 & 1 & -2 & 2 & 2 & -1 & 3 \\
 3 & 3 & -1 & -1 & 2 & -2 & -2 & 1 & 4 \\
 -1 & -1 & -2 & -2 & 4 & 3 & 3 & 2 & 1 \\
 1 & 1 & 2 & 2 & 3 & -3 & 4 & -2 & -1 \\
\end{array}
\right).} $$\\}
	\end{rem}

\begin{rem} \textup{Theorem~\ref{lm1} does not hold without the assumption that the characteristic polynomial of $\Gamma$ is irreducible over $\mathbb{Q}$, even if $\Gamma$ is controllable.
Let $\Gamma$ and $\tilde{\Gamma}$ be two signed trees with bipartite-adjacency matrices $M$ and $\tilde{M}$ given as follows respectively:
$${\tiny M=\left(
\begin{array}{ccccccccc}
 1 & 0 & 0 & 0 & 0 & 0 & 0 & 0 & 0 \\
 -1 & 1 & 0 & 0 & 0 & 0 & -1 & 0 & 0 \\
 0 & -1 & -1 & 0 & 0 & 0 & 0 & 0 & 0 \\
 0 & 0 & -1 & 1 & 0 & 0 & 0 & 0 & 0 \\
 0 & 1 & 0 & 0 & 1 & -1 & 0 & 0 & 0 \\
 0 & 0 & 0 & 0 & 1 & 0 & 0 & 0 & 0 \\
 0 & 0 & 0 & 0 & 0 & 0 & -1 & 0 & 0 \\
 0 & 0 & 0 & 0 & 0 & 0 & 1 & 1 & 0 \\
 0 & 0 & 0 & 0 & 0 & 0 & 0 & -1 & -1 \\
\end{array}
\right),~
\tilde{M}=\left(
\begin{array}{ccccccccc}
 1 & 0 & 0 & 0 & 0 & 0 & 0 & 0 & 0 \\
 -1 & 1 & -1 & 0 & 0 & 0 & 0 & 0 & 0 \\
 0 & 0 & -1 & 0 & 0 & 0 & 0 & 0 & 0 \\
 0 & 0 & 1 & 1 & 0 & 0 & 0 & 0 & 0 \\
 0 & 0 & 0 & -1 & -1 & 0 & 0 & 0 & 0 \\
 0 & -1 & 0 & 0 & 0 & 1 & 0 & 0 & 0 \\
 0 & 1 & 0 & 0 & 0 & 0 & 1 & 0 & 1 \\
 0 & 0 & 0 & 0 & 0 & -1 & 0 & -1 & 0 \\
 0 & 0 & 0 & 0 & 0 & 0 & 0 & 0 & -1 \\
\end{array}
\right).}$$
${\tiny Q=\frac{1}{5}\left(
\begin{array}{cccccccccccccccccc}
 5 & 0 & 0 & 0 & 0 & 0 & 0 & 0 & 0 & 0 & 0 & 0 & 0 & 0 & 0 & 0 & 0 & 0 \\
 0 & 5 & 0 & 0 & 0 & 0 & 0 & 0 & 0 & 0 & 0 & 0 & 0 & 0 & 0 & 0 & 0 & 0 \\
 0 & 0 & 0 & 0 & 0 & 3 & -2 & 1 & -1 & 0 & 0 & 0 & 0 & 0 & 1 & -1 & 2 & 2 \\
 0 & 0 & 0 & 0 & 0 & -1 & -1 & -2 & 2 & 0 & 0 & 0 & 0 & 0 & 3 & 2 & 1 & 1 \\
 0 & 0 & 0 & 0 & 0 & -2 & 3 & 1 & -1 & 0 & 0 & 0 & 0 & 0 & 1 & -1 & 2 & 2 \\
 0 & 0 & 0 & 0 & 0 & 1 & 1 & 2 & -2 & 0 & 0 & 0 & 0 & 0 & 2 & 3 & -1 & -1 \\
 0 & 0 & 5 & 0 & 0 & 0 & 0 & 0 & 0 & 0 & 0 & 0 & 0 & 0 & 0 & 0 & 0 & 0 \\
 0 & 0 & 0 & 5 & 0 & 0 & 0 & 0 & 0 & 0 & 0 & 0 & 0 & 0 & 0 & 0 & 0 & 0 \\
 0 & 0 & 0 & 0 & 5 & 0 & 0 & 0 & 0 & 0 & 0 & 0 & 0 & 0 & 0 & 0 & 0 & 0 \\
 0 & 0 & 0 & 0 & 0 & 0 & 0 & 0 & 0 & 5 & 0 & 0 & 0 & 0 & 0 & 0 & 0 & 0 \\
 0 & 0 & 0 & 0 & 0 & 0 & 0 & 0 & 0 & 0 & 5 & 0 & 0 & 0 & 0 & 0 & 0 & 0 \\
 0 & 0 & 0 & 0 & 0 & -1 & -1 & 3 & 2 & 0 & 0 & 0 & 0 & 0 & -2 & 2 & 1 & 1 \\
 0 & 0 & 0 & 0 & 0 & 2 & 2 & -1 & 1 & 0 & 0 & 0 & 0 & 0 & -1 & 1 & 3 & -2 \\
 0 & 0 & 0 & 0 & 0 & 2 & 2 & -1 & 1 & 0 & 0 & 0 & 0 & 0 & -1 & 1 & -2 & 3 \\
 0 & 0 & 0 & 0 & 0 & 1 & 1 & 2 & 3 & 0 & 0 & 0 & 0 & 0 & 2 & -2 & -1 & -1 \\
 0 & 0 & 0 & 0 & 0 & 0 & 0 & 0 & 0 & 0 & 0 & 5 & 0 & 0 & 0 & 0 & 0 & 0 \\
 0 & 0 & 0 & 0 & 0 & 0 & 0 & 0 & 0 & 0 & 0 & 0 & 5 & 0 & 0 & 0 & 0 & 0 \\
 0 & 0 & 0 & 0 & 0 & 0 & 0 & 0 & 0 & 0 & 0 & 0 & 0 & 5 & 0 & 0 & 0 & 0 \\
\end{array}
\right).}$\\
It is easy to verify that $$\phi(\Gamma;x)=
(-1 + x) (1 + x) (-1 - x + x^2) (-1 + x + x^2) (1 - 21 x^2 + 95 x^4 -
   119 x^6 + 60 x^8 - 13 x^{10} + x^{12}),$$ which is reducible over $\mathbb{Q}$ and $\Gamma$ is controllable.
   Nevertheless, the unique regular rational orthogonal matrix $Q$ (shown as above) such that $Q^{\T}A(\Gamma)Q=A(\tilde{\Gamma})$ is
  not the form as in Theorem~\ref{lm1}.}
\end{rem}

\section{Conclusions and Future Work}

In this paper, we have given a simple arithmetic condition on a tree $T$ with an irreducible characteristic polynomial, under which every signed tree with underlying graph $T$ is DGS. This is a little bit surprising in contrast with Schwenk's remarkable result stating almost every tree has a cospectral mate.

However, there are several questions remained to be answered. We end the paper by proposing the following questions:

\begin{que}
How can Theorem~\ref{main} be generalized to signed bipartite graphs?
\end{que}
\begin{que}
Is it true that every tree with an irreducible characteristic
polynomial is DGS?

\end{que}
\begin{que}
Is Theorem~\ref{lm1} true for controllable bipartite graphs?
\end{que}

For Question 1, the difficulty lies in the fact that for a signed bipartite graph $\Gamma$, a signed graph $\tilde{\Gamma}$ generalized cospectral with $\Gamma$ is not necessarily bipartite. For Question 2, we know that it is not true for signed trees. For Question 3, we know that it is not true for controllable signed bipartite graphs.
But generally we do not know any single counterexample to Questions 2 and 3. The above questions need further investigations in the future.

\section*{Acknowledgments}
The research of the second author is supported by National Natural Science Foundation of China (Grant Nos.\,11971376 and 12371357) and the
third author is supported by Fundamental Research Funds for the Central Universities (Grant No.\,531118010622).

The authors would like to thank Professor Huiqiu Lin from East China University of Science and Technology for useful discussions.


\begin{thebibliography}{22}

		\bibitem{[1]}H.H. G\"{u}nthard, H. Primas, Zusammenhang von Graphentheorie und MO-Theorie von Molekeln mit Systemen konjugierter Bindungen, \emph{Helvetica Chimica Acta}, 39 (1956) 1645-1653.
	
	\bibitem{[2]}L. Collatz, U. Sinogowitz, Spektren endlicher Grafen, \emph{Abh. Math. Sem. Univ. Hamburg}, 1957
(21): 63-77.
	
	
	
	\bibitem{BSW}W.H. Haemers, X. Liu, Y. Zhang, Spectral characterizations of lollipop graphs, \emph{Linear Algebra Appl}. 428 (2008) 2415-2423.
	
	\bibitem{BH} A.E. Brouwer and W.H. Haemers, Spectra of Graphs, \emph{Springer}, 2012.
	
	\bibitem{JN}C. R. Johnson, M. Newman, A note on cospectral graphs, \emph{J. Combin. Theory, Ser. B}, 28 (1980) 96-103.
	
	\bibitem{CDS}	D. M. Cvetkovi\'{c}, M. Doob, H. Sachs, Spectra of Graphs, \emph{Academic Press,
	NewYork}, 1982.
	
	\bibitem{[8]}
	E. R. van Dam, W. H. Haemers, Which graphs are determined by their spectrum?
	\emph{Linear Algebra Appl}. 373 (2003) 241-272.
	
	\bibitem{[9]}
	E. R. van Dam, W. H. Haemers, Developments on spectral characterizations
	of graphs, \emph{Discrete Mathematics}, 309 (2009) 576-586.
	
	
	\bibitem{Lang}	S. Lang, Algebra, \emph{Springer-Verlag, New York}, 2002.

\bibitem{LS} J. H. van Lint and J. J. Seidel, Equilateral point sets in elliptic geometry, \emph{Proc. Nederl. Akad. Wetenschappen A}, 1966 (69): 335-348.

	
		
	\bibitem{[12]} M. Fisher, On hearing the shape of a drum, \emph{J. Combin. Theory}, 1 (1966) 105-125.
	
	\bibitem{[13]} M. Kac, Can one hear the shape of a drum? \emph{J. Amer. Math. Monthly}, 73 (1966) 1-23.
	
	\bibitem{GM1} C.D. Godsil, B.D. McKay, Constructing cospectral graphs, \emph{Aequation Mathematicae},
	25 (1982) 257-268.
	
	\bibitem{Godsil} C.D. Godsil, Controllable subsets in graphs, \emph{Annals of Combinatorics}, 16
	(2012) 733-744.

 \bibitem{GR}C.D. Godsil, G. Royle, Algebraic Graph Theory, Graduate Texts in Mathematics (GTM), volume 207, \emph{Springer, New York}, 2001.	

 \bibitem{[3]}A.J. Schwenk, Almost all trees are cospectral, \emph{in: F. Harary (Ed.), New Directions in the Theory of Graphs, Academic Press, NewYork}, 1973, pp. 275-307.

\bibitem{S}	T. Sunada, Riemannian coverings and isospectral manifolds, \emph{Ann. Math.} 1985 (121):169-186.	



	\bibitem{wang0} W. Wang, A uniqueness theorem on matrices and reconstruction, \emph{J. Combin. Theory, Ser. B}, 99 (2009) 261-265.
	
	\bibitem{WX1}	W. Wang, C. X. Xu, A sufficient condition for a family of graphs being determined
	by their generalized spectra, \emph{Eur. J. Combin.} 27 (2006) 826-840.
	
		
	
	\bibitem{Wang1} W. Wang, Generalized spectral characterization revisited, \emph{Electron. J.
	Combin.} 20 (4) (2013), $\sharp$ P4.
	
	
	\bibitem{Wang2}  W. Wang, A simple arithmetic criterion for graphs being determined by their generalized
spectra, \emph{J. Combin. Theory, Ser. B}, 122 (2017) 438-451.
	
	\bibitem{[22]} W. Wang, T. Yu, Square-free discriminants of matrices
	and the generalized spectral characterizations of graphs, \emph{arXiv:1608.01144}
	
	
	
\end{thebibliography}
\end{document}